\newtheorem{thm}{Theorem}[section]
\newtheorem{lem}[thm]{Lemma}
\newtheorem{prop}[thm]{Proposition}
\newtheorem{cor}[thm]{Corollary}
\newtheorem{defn}[thm]{Definition}
\newtheorem{rem}[thm]{Remark}
\def\R{\mathbb{R}}
\def\N{\mathbb{N}}
\def\Z{\mathbb{Z}}
\def\E{\mathbb{E}}
\def\FF{F}
\def\O{\mathcal{O}}
\def\X{\mathcal{X}}
\def\C{\mathbb{C}}
\def\1{\mathbbm{1}}
\def\an {\text{\, and \,}}
\def\tr{\mathrm{tr}}
\def\GL{\mathrm{GL}}
\def\Mat{\mathrm{Mat}}
\def\Skew{\mathrm{Skew}}
\def\diag{\mathrm{diag}}
\def\supp{\mathrm{supp}}
\def\ORB{\mathscr{ORB}}
\def\Sym{\mathrm{Sym}}
\def\ch{\mathrm{char}}
\def\co{\mathrm{co}}
\def\XXint#1#2#3{{\setbox0=\hbox{$#1{#2#3}{\int}$ }
\vcenter{\hbox{$#2#3$ }}\kern-.6\wd0}}
\begin{document}

\title[Ergodic measures on infinite skew-symmetric  matrices]{Ergodic measures on  infinite skew-symmetric matrices over non-Archimedean local fields}


\author
{Yanqi Qiu}
\address
{Yanqi QIU: CNRS, Institut de Math{\'e}matiques de Toulouse, Universit{\'e} Paul Sabatier, 118 Route de Narbonne, F-31062 Toulouse Cedex 9, France}

\email{yqi.qiu@gmail.com}


\begin{abstract}
Let $F$ be a non-discrete non-Archimedean locally compact  field  such that the characteristic $\ch(F)\ne 2$ and let $\mathcal{O}_F$ be  the ring of integers in $F$. The main results of this paper are Theorem \ref{CT-skew} that classifies ergodic probability measures on the space $\mathrm{Skew}(\mathbb{N}, F)$ of infinite skew-symmetric matrices with respect to the natural action of the group $\mathrm{GL}(\infty,\mathcal{O}_F)$ and Theorem \ref{thm-cor}, that gives  an unexpected natural correspondence between the set of $\mathrm{GL}(\infty,\mathcal{O}_F)$-invariant Borel probability measures on $\mathrm{Sym}(\mathbb{N}, F)$ and the set of $\mathrm{GL}(\infty,\mathcal{O}_F) \times \mathrm{GL}(\infty,\mathcal{O}_F)$-invariant Borel probability measures on the space $\mathrm{Mat}(\mathbb{N}, F)$ of infinite  matrices over $F$.
\end{abstract}

\subjclass[2010]{Primary 37A35; Secondary 60B10, 60B15}
\keywords{ergodic measures, non-Archimedean locally compact fields, Vershik-Kerov ergodic method, Ismagilov-Olshanski multiplicativity,  orbital integrals, skew-symmetric matrices}

\maketitle


\setcounter{equation}{0}

\section{Main results}\label{sec-main}
The Vershik-Kerov ergodic method is immensely successful in studying the  ergodic probability measures with respect to a fixed action of an inductively compact group, see  \cite{Vershik-inf-group, VK81, VK-char, KV86, OV-ams96, BQ-sym} and references therein.  In the present note,  following the method in \cite{BQ-sym},  we continue the study of the ergodic measures on the space of infinite skew-symmetric matrices over a local field.  

Fix any non-discrete locally compact  non-Archimedean field $F$ such that the characteristic $\ch(F) \ne 2$, that is, in $F$, we have $2 \ne 0$. Let $| \cdot |$ denote the  {\em absolute value} on $F$, then  the ring  of integers in $F$  is given by $\O_F  = \{x\in F: |x| \le 1\}$.    The subset $ \mathfrak{m}= \{x\in \FF: | x| < 1\}$ is a maximal and  principle ideal of $\O_F$. Throughout the paper, we fix a generator  $\varpi$ of $\mathfrak{m}$, that is, $\mathfrak{m}= \varpi \O_F$.
 The quotient  $\O_F/\mathfrak{m}$ is  a finite field with  $q =p^f$  elements for some prime number $p$ and positive integer $f\in\N$.

For any $n\in\N$, denote by $\GL(n, \O_F)$ the general linear group over $\O_F$ of size $n\times n$. Let 
\begin{align*}
 \GL(\infty, \O_F) : = \lim\limits_{\longrightarrow} \GL(n, \O_F)
 \end{align*} 
 be the inductive limit group. Equivalently, $\GL(\infty, \O_F)$ is the group of  infinite invertible matrices $g$ with entries in $\O_F$ such that $g_{ij} = \delta_{ij}$ if  $i +j$ is large enough.

Let $\Skew(\N, F)$ denote the space of infinite skew-symmetric matrices over $F$, that is, 
 \begin{align*}
 \Skew(\N, F): & = \{ [X_{ij}]_{i, j\in\N} | X_{ij}\in F,  X_{ij}  = - X_{ji},  \forall  i, j \in \N\}. 
 \end{align*}
 Similarly, for any $n\in\N$, denote
\begin{align*}
\Skew(n, \FF): &= \{ X = [X_{ij}]_{1\le i, j\le n} | X_{ij}\in F, X  = - X^t \}.
 \end{align*}
Consider the group action of $\GL(\infty, \O_F)$ on $\Skew(\N, F)$ defined by
\begin{align*}
(g, A) \mapsto g A g^{t}, \quad g \in \GL(\infty, \O_F), A \in \Skew(\N, F). 
\end{align*}
Let $\mathcal{P}_{\mathrm{erg}}(\Skew(\N, F))$ denote the corresponding space of ergodic probability measures on $\Skew(\N, F)$, endowed with the induced weak topology.

Our main result is the classification of $\mathcal{P}_{\mathrm{erg}}(\Skew(\N, F))$.  Let us proceed to the precise statement.  Define 
\begin{align*}
\Delta: =\left\{ \mathbbm{k}= (k_j)_{j = 1}^\infty\Big|  k_j \in \Z \cup\{-\infty\}; k_1 \ge k_2 \ge \cdots \right\}. 
\end{align*}
The set $\Delta$ is equipped with the induced topology of the Tychonoff's product topology on  $(\Z\cup\{-\infty\})^\N$ by the inclusion $\Delta\subset (\Z\cup\{-\infty\})^\N$. To each sequence $\mathbbm{k} \in \Delta$, we now assign an ergodic  $\GL(\infty, \O_F)$-invariant probability measure on $\Skew(\N, F)$.

In what follows, we use the convention $\varpi^\infty=0$.  
\begin{defn}\label{defn-rm}
Given an element $\mathbbm{k} \in\Delta$, let $\mu_{\mathbbm{k}}$ be the probability distribution of the infinite skew-symmetric random matrix $A_{\mathbbm{k}}$ defined as follows.   Let 
 \begin{align*}
 X_i^{(n)},  \quad Y_i^{(n)},  \quad Z_{ij}, \quad i< j, n = 1, 2, \cdots
 \end{align*}
 be independent  random variables,  each sampled uniformly from $\O_F$.   
 
 Denote $k: = \lim\limits_{n\to\infty}k_n \in \Z\cup\{-\infty\}$ and set 
\begin{align*}
A_{\mathbbm{k}} : =\sum\limits_{n:\, k_n > k} \varpi^{-k_n} \Big [ X_i^{(n)} Y_j^{(n)} -  X_j^{(n)} Y_i^{(n)} \Big]_{i, j\in\N} +  \varpi^{-k}Z,
\end{align*}
where $Z: = [Z_{ij}]_{i,j\in\N}$ is defined by $Z_{ij} =- Z_{ji}$ for $i>j$ and $Z_{ii}=0$.
\end{defn}

\begin{thm}\label{CT-skew}
Assume that the characteristic $\ch(F)\ne 2$. Then the  map $\mathbbm{k} \mapsto \mu_{\mathbbm{k}}$ defines a homeomorphism between  $\Delta$ and $\mathcal{P}_{\mathrm{erg}}(\Skew(\N, F))$. 
\end{thm}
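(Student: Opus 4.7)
The plan is to apply the Vershik-Kerov ergodic method, following \cite{BQ-sym} and adapting it from the symmetric to the skew-symmetric setting. Four subtasks present themselves: (i) describe the $\GL(n,\O_F)$-orbits on $\Skew(n,F)$; (ii) verify that each $\mu_{\mathbbm{k}}$ is $\GL(\infty,\O_F)$-invariant and ergodic; (iii) show every ergodic invariant measure arises as some $\mu_{\mathbbm{k}}$; (iv) check that $\mathbbm{k}\mapsto\mu_{\mathbbm{k}}$ is a homeomorphism.

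For (i) I would establish a non-Archimedean Darboux normal form: every $A\in\Skew(n,F)$ is $\GL(n,\O_F)$-congruent under $A\mapsto gAg^t$ to a block-diagonal matrix with $2\times 2$ symplectic-type blocks $\varpi^{k_j}\bigl(\begin{smallmatrix}0&1\\-1&0\end{smallmatrix}\bigr)$ (with the convention $\varpi^{-\infty}=0$), the weakly decreasing sequence $k_1\ge k_2\ge\cdots\ge k_{\lfloor n/2\rfloor}$ being a complete invariant of the orbit. The proof iteratively selects an entry $A_{ij}$ of maximal absolute value, moves it by a permutation to position $(1,2)$, uses it as a pivot to clear the first two rows and columns by elementary row/column operations, and recurses on the complementary block. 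The hypothesis $\ch(F)\ne 2$ enters here: in characteristic $2$ the diagonal of a skew-symmetric matrix is unconstrained and the symplectic block is no longer the right building block. For (ii), invariance of $\mu_{\mathbbm{k}}$ is immediate from the $\GL(\infty,\O_F)$-invariance of the uniform law on $\O_F^\N$ and of the uniform skew-symmetric law $\varpi^{-k}Z$; ergodicity I would obtain via the Ismagilov-Olshanski scheme, computing the spherical functions attached to the Gelfand-pair structure on $(\GL(\infty,\O_F),\Skew(\N,F))$, establishing multiplicativity across the rank-two building blocks, and deducing that $\mu_{\mathbbm{k}}$ corresponds to an indecomposable spherical function, hence an extreme point of the simplex of invariant probability measures.

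The technical heart is (iii), carried out through Vershik-Kerov approximation: an ergodic invariant $\mu$ is, for $\mu$-a.e.\ $A$, the weak limit of the pushforward of Haar measure on $\GL(n,\O_F)$ under $g\mapsto gA_ng^t$, where $A_n$ is the top-left $n\times n$ corner of $A$. Using the normal form, the orbit type $\mathbbm{k}^{(n)}(A)=(k_1^{(n)}(A),k_2^{(n)}(A),\ldots)$ is well defined, and an interlacing argument for the Smith-type invariants under principal-submatrix restriction shows that each coordinate $k_j^{(n)}(A)$ admits a limit $k_j(A)\in\Z\cup\{-\infty\}$ as $n\to\infty$, which ergodicity forces to be $\mu$-a.s.\ constant, producing the candidate $\mathbbm{k}\in\Delta$. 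One then identifies the weak limit with $\mu_{\mathbbm{k}}$: writing a representative in normal form and conjugating by a Haar-distributed $g$, the $j$-th symplectic block contributes a rank-two skew outer product $\varpi^{-k_j}(X_iY_j-X_jY_i)$ whose joint law on a finite window converges, by equidistribution of Haar columns in $\O_F^n$, to the independent uniform law of Definition \ref{defn-rm}; the tail blocks with $k_j^{(n)}\to k$ aggregate into the $\varpi^{-k}Z$ term.

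The main obstacle lies precisely in this last identification: the weak convergence of orbital measures to $\mu_{\mathbbm{k}}$ requires delicate non-Archimedean orbital integral asymptotics together with a clean decoupling of finitely many ``visible'' modes from the infinitely many ``tail'' modes. Once (iii) is in hand, (iv) is straightforward: injectivity is automatic because $\mathbbm{k}$ is recoverable as a $\mu_{\mathbbm{k}}$-a.s.\ measurable function of $A$ via the normal form, while continuity in both directions follows from the explicit independent-sum structure of $\mu_{\mathbbm{k}}$ and from the joint measurability of the finite-corner orbit invariants, yielding the desired homeomorphism between $\Delta$ and $\mathcal{P}_{\mathrm{erg}}(\Skew(\N,F))$.
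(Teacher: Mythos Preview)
Your outline follows the same Vershik--Kerov skeleton as the paper (skew normal form, invariance and ergodicity of the $\mu_{\mathbbm{k}}$, approximation of ergodic measures by orbital measures, identification of limits, continuity), but the execution differs in two places worth noting. For ergodicity, the paper does not go through spherical functions and multiplicativity as you propose; instead it uses the Okounkov--Olshanski device of embedding $\mathcal{P}_{\mathrm{inv}}(\Skew(\N,F))$ into exchangeable sequences on $F^{\N}$ via $A\mapsto(A(2i-1,2i))_{i\ge1}$ and then invoking De~Finetti, which is quicker and avoids setting up the Gelfand-pair machinery. For the identification step (your (iii)), the paper works entirely on the Fourier side: it computes $\widehat{\mu_{\mathbbm{k}}}$ explicitly (Proposition~\ref{prop-explicit}), then proves the asymptotic orbital-integral formula $\widehat{m_n(D)}(A)\approx\prod_{i,j}\Theta(a_ix_j)$ by replacing the Haar integral over $\GL(2n,\O_F)$ by the Haar integral over $\Mat(2n,\O_F)$ (Proposition~\ref{prop-gl-mat}, which is precisely your ``equidistribution of Haar columns''), after which the integrand factors over the $2\times2$ blocks. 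In particular, no interlacing of Smith-type invariants under corner restriction is used; the limiting $\mathbbm{k}$ is extracted directly from the limiting characteristic function and the tightness criterion of Lemma~\ref{lem-compact}. Your more probabilistic route through interlacing is reasonable and arguably more geometric, but it would require a separate lemma on how the skew invariant factors behave under passage to principal corners, which the characteristic-function computation sidesteps entirely. One small correction: $\ch(F)\ne2$ is needed not only for the normal form but also in the orbital-integral calculation, where the function $\Theta$ involves $|2|$ explicitly (Lemma~\ref{lem-theta}).
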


\begin{rem}
By Theorem \ref{CT-skew},  the space $\mathcal{P}_{\mathrm{erg}}(\Skew(\N, F))$  is weakly closed in the space of all Borel measures  on $\Skew(\N, F)$.
\end{rem}

The proof of Theorem \ref{CT-skew} is  similar to that of \cite[Theorem 1.2]{BQ-sym},  in this note  we only give an outline. 

For stating our second result, let us recall some notation from \cite{BQ-sym}.  Given any group action of a group $G$ on a Polish space $\X$, we denote by $\mathcal{P}_{\mathrm{inv}}^G(\X)$ or  $\mathcal{P}_{\mathrm{inv}}(\X)$  the corresponding set of invariant Borel probability measures on $\X$ and by $\mathcal{P}_{\mathrm{erg}}^G(\X)$ or  $\mathcal{P}_{\mathrm{erg}}(\X)$ the set of ergodic  Borel probability measures on $\X$.

Recall the definition
$$
 \Mat(\N, F):  = \{ X  = [X(i,j )]_{i,j\in\N}| X(i, j)\in F \text{ for all $i, j \in \N$}\}
$$
and the group action of $\GL(\infty, \O_F) \times \GL(\infty, \O_F)$  on $\Mat(\N, F)$:  
$$
((g_1, g_2), X) \mapsto g_1 X g_2^{-1}, \quad g_1, g_2 \in \GL(\infty, \O_F), X \in \Mat(\N, F).
$$

The map $\tau(X):= X - X^t$ from  $\Mat(\N, F)$ to $\Skew(\N, F)$ 
induces an affine map
\begin{align}\label{def-tau}
\tau_{*}:  \mathcal{P}_{\mathrm{inv}}(\Mat(\N, F)) \to \mathcal{P}_{\mathrm{inv}}(\Skew(\N, F))
\end{align}
by restricting  the pushforward map onto $\mathcal{P}_{\mathrm{inv}}(\Mat(\N, F))$. 

\begin{thm}\label{thm-cor}
Assume that the characteristic $\ch(F)\ne 2$. Then the map $\tau_{*}:  \mathcal{P}_{\mathrm{inv}}(\Mat(\N, F)) \to \mathcal{P}_{\mathrm{inv}}(\Skew(\N, F))$ is a homeomorphic affine isomorphism between topological convex sets. 
\end{thm}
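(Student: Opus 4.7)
The plan is to lift the classification of Theorem \ref{CT-skew} to all invariant measures by an ergodic-decomposition argument, using a companion classification of $\mathcal{P}_{\mathrm{erg}}^{\GL(\infty,\O_F)\times\GL(\infty,\O_F)}(\Mat(\N,F))$ together with a direct computation of $\tau_*$ on ergodic measures.

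First, I would invoke the analogue of Theorem \ref{CT-skew} for $\Mat(\N,F)$, namely the homeomorphic parameterization of $\mathcal{P}_{\mathrm{erg}}^{\GL(\infty,\O_F)\times\GL(\infty,\O_F)}(\Mat(\N,F))$ by the same Polish space $\Delta$: the ergodic measure $\nu_{\mathbbm{k}}$ corresponding to $\mathbbm{k}\in\Delta$ is the law of
\[
M_{\mathbbm{k}}=\sum_{n:\,k_n>k}\varpi^{-k_n}\bigl[X_i^{(n)}Y_j^{(n)}\bigr]_{i,j\in\N}+\varpi^{-k}W,
\]
with $X_i^{(n)}, Y_i^{(n)}$ and the entries $W_{ij}$ of $W$ sampled independently and uniformly from $\O_F$, and $k=\lim_n k_n$. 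This is the $\Mat$-analogue of \cite[Theorem 1.2]{BQ-sym} and is obtained by the same Vershik-Kerov/Ismagilov-Olshanski machinery that proves Theorem \ref{CT-skew}.

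Second, I would verify directly that $\tau_*\nu_{\mathbbm{k}}=\mu_{\mathbbm{k}}$. By additivity of $\tau$ and the identity $\tau(uv^t)=uv^t-vu^t$, each rank-one piece in $M_{\mathbbm{k}}$ maps to the corresponding summand of $A_{\mathbbm{k}}$ in Definition \ref{defn-rm}. For the noise term, $\tau(\varpi^{-k}W)=\varpi^{-k}(W-W^t)$ is antisymmetric with zero diagonal; for $i<j$ the entries $W_{ij}-W_{ji}$ are independent (distinct pairs involve disjoint coordinates of $W$) and each, being the difference of two independent Haar variables on the compact additive group $\O_F$, is itself Haar-distributed on $\O_F$. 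This exactly reproduces the distribution of $\varpi^{-k}Z$ in $A_{\mathbbm{k}}$. The hypothesis $\ch(F)\ne 2$ enters here to guarantee that $\Mat=\Sym\oplus\Skew$ and that $\tau:\Mat(\N,F)\to\Skew(\N,F)$ is a surjective $F$-linear map.

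Finally, I would pass from ergodic to invariant measures via a Bauer-simplex argument. By the Remark following Theorem \ref{CT-skew}, $\mathcal{P}_{\mathrm{erg}}(\Skew(\N,F))$ is weakly closed in $\mathcal{P}_{\mathrm{inv}}(\Skew(\N,F))$, so the latter is a Bauer simplex; the corresponding statement for $\Mat(\N,F)$ follows from Step 1. Therefore, both $\mathcal{P}_{\mathrm{inv}}(\Mat(\N,F))$ and $\mathcal{P}_{\mathrm{inv}}(\Skew(\N,F))$ are affinely homeomorphic to $\mathcal{P}(\Delta)$ via their respective barycenter maps. Since $\tau_*$ is continuous, affine, and by Step 2 agrees on extreme points with the identity $\nu_{\mathbbm{k}}\mapsto\mu_{\mathbbm{k}}$, it corresponds under these identifications to the identity on $\mathcal{P}(\Delta)$, hence is a homeomorphic affine isomorphism. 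The main obstacle is accordingly Step 1, the companion classification on $\Mat(\N,F)$, which can be established by the same method as Theorem \ref{CT-skew}; everything else is then essentially formal.
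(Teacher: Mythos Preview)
Your approach is essentially the paper's: both invoke the classification of $\mathcal{P}_{\mathrm{erg}}(\Mat(\N,F))$ from \cite{BQ-sym} (so your ``Step 1'' is not an obstacle but a citation), check that $\tau_*$ sends $\nu_{\mathbbm{k}}$ to $\mu_{\mathbbm{k}}$, and then pass to all invariant measures via Bufetov's ergodic decomposition (Theorem~\ref{Buf-thm}). The one difference is the continuity of $\tau_*^{-1}$: the paper extends $\tau_*$ to the Banach spaces $\mathcal{M}_{\mathrm{inv}}$ of invariant signed measures (total-variation norm) and applies the Open Mapping Theorem, whereas you appeal to a Bauer-simplex identification with $\mathcal{P}(\Delta)$. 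Be aware that $\Delta$ is not compact (the first coordinate $k_1$ is unbounded in $\Z$), so neither $\mathcal{P}_{\mathrm{erg}}$ nor $\mathcal{P}_{\mathrm{inv}}$ is compact and the classical Bauer theorem does not apply directly; you would still need to argue that the barycenter maps $\mathcal{P}(\Delta)\to\mathcal{P}_{\mathrm{inv}}$ are homeomorphisms, which is exactly the point the paper's Open-Mapping argument is designed to circumvent.
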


Comparing Theorem \ref{CT-skew} with \cite[Theorem 1.2]{BQ-sym}, using the notation in \cite{BQ-sym},  we also  have 
\begin{cor}
There is a natural correspondence between the sets of spherical representations of the following two infinite dimensional Cartan motion groups: 
$$
\Mat(\infty, F) \rtimes (\GL(\infty, \O_F) \times \GL(\infty, \O_F)),
$$
$$
  \Skew(\infty, F) \rtimes \GL(\infty, \O_F).
$$
\end{cor}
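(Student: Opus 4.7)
The plan is to derive the corollary as a direct application of Theorem \ref{thm-cor} together with the Gelfand-pair dictionary for infinite-dimensional Cartan motion groups, in the form developed and used in \cite{BQ-sym}. For a Cartan motion group $V\rtimes K$, with $V$ an appropriate locally compact abelian group of matrices over $F$ and $K$ an inductive limit of compact groups acting by automorphisms, Ismagilov--Olshanski multiplicativity yields a canonical bijection
$$\{\text{spherical representations of } (V\rtimes K,\, K)\}\;\longleftrightarrow\; \mathcal{P}_{\mathrm{inv}}^K(\widehat V),$$
under which the spherical function attached to a $K$-invariant measure on $\widehat V$ is the $K$-average of its Fourier transform.

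To use this dictionary, I would identify the Pontryagin duals explicitly. Fix a non-trivial additive character $\psi\colon F\to\mathbb{C}^\times$. The trace pairings
$$\langle X,Y\rangle \;=\; \psi\!\Big(\sum_{i,j} X_{ij}Y_{ij}\Big)$$
(where the sum is finite because one of the two arguments lies in a restricted direct sum) induce topological group isomorphisms
$$\widehat{\Mat(\infty,F)}\;\cong\;\Mat(\N,F), \qquad \widehat{\Skew(\infty,F)}\;\cong\;\Skew(\N,F).$$
A short computation shows that the dual of the two-sided action $(g_1,g_2)\cdot X = g_1 X g_2^{-1}$ on $\Mat(\infty,F)$ is, up to swapping the two factors of $\GL(\infty,\O_F)\times\GL(\infty,\O_F)$, the two-sided action on $\Mat(\N,F)$ appearing in Theorem \ref{thm-cor}; and the dual of the congruence action $g\cdot A = gAg^t$ on $\Skew(\infty,F)$ is, up to the involution $g\mapsto g^{-t}$ of $\GL(\infty,\O_F)$, the congruence action on $\Skew(\N,F)$. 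In both cases the set of invariant probability measures is unchanged, so under the Ismagilov--Olshanski bijection the two sets of spherical representations in the corollary become $\mathcal{P}_{\mathrm{inv}}(\Mat(\N,F))$ and $\mathcal{P}_{\mathrm{inv}}(\Skew(\N,F))$, respectively.

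The natural correspondence required by the corollary is then provided by the affine homeomorphism $\tau_*$ of Theorem \ref{thm-cor}. Essentially all of the mathematical content resides in Theorem \ref{thm-cor}; the remaining input is routine Pontryagin-duality bookkeeping over a non-Archimedean local field and presents no real obstacle, so the only genuine difficulty in the chain of reasoning is (as expected) the construction and classification underlying Theorem \ref{thm-cor} itself.
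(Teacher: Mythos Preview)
Your argument is sound in outline and matches the paper's intent: the paper does not give a proof but simply records the corollary as a consequence of comparing Theorem~\ref{CT-skew} with \cite[Theorem 1.2]{BQ-sym}, together with the Olshanski dictionary from \cite{BQ-sym} between spherical representations of a Cartan motion group $V\rtimes K$ and $K$-invariant measures on $\widehat{V}$. Your Pontryagin-duality bookkeeping and invocation of Theorem~\ref{thm-cor} are a legitimate expansion of that sentence.

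One small correction is worth making. In the standard dictionary (as in \cite{BQ-sym}), irreducible spherical representations correspond to \emph{ergodic} invariant probability measures, i.e.\ to $\mathcal{P}_{\mathrm{erg}}^{K}(\widehat{V})$, not to all of $\mathcal{P}_{\mathrm{inv}}^{K}(\widehat{V})$. So the bijection you want on the measure side is between $\mathcal{P}_{\mathrm{erg}}(\Mat(\N,F))$ and $\mathcal{P}_{\mathrm{erg}}(\Skew(\N,F))$. This is exactly what the paper obtains by comparing the two classification theorems (both parameter sets equal $\Delta$), and it is also contained in your route, since the proof of Theorem~\ref{thm-cor} shows that $\tau_*$ restricts to a homeomorphism $\mathcal{P}_{\mathrm{erg}}(\Mat(\N,F))\to\mathcal{P}_{\mathrm{erg}}(\Skew(\N,F))$. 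With that adjustment your proposal is correct and essentially the same as the paper's (implicit) argument.
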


\section{Preliminaries}\label{sec-pre}
\subsection{Notation}

For any Polish space $\X$, we denote by $\mathcal{P}(\X)$ the set of Borel probability measures on $\X$. If a sequence $(\mu_n)_{n\in\N}$ in $\mathcal{P}(\X)$ converges weakly to $\mu\in\mathcal{P}(\X)$, then we denote it by $\mu_n \Longrightarrow \mu$.

Let $e_{11}$ denote the matrix whose  $(1,1)$-coefficient is $1$ and all other coefficients are zeros. The size of the matrix $e_{11}$ will be clear from the context.  Let $J$ denote the following  skew-symmetric matrix
\begin{align*}
 J := \left[\begin{array}{cc} 0 & 1 \\ -1 & 0\end{array}\right]. 
\end{align*}
In what follows, if $n\ge 2$, then for any  matrix $A$ over $F$ of size $n\times n$,  we use the conventions:
$$
A J :  = A \cdot \diag(J, \underbrace{0, \cdots, 0}_{\text{$n-2$ times}}) \an  J A :  =\diag(J, \underbrace{0, \cdots, 0}_{\text{$n-2$ times}}) \cdot A.
$$

Recall  the definition of  the characteristic function for a Borel probability measure on $\Skew(\N, F)$.    First, fix any  character $\chi$ of $F$  such that 
\begin{align*}  
\text{  $\chi|_{\O_F} \equiv 1 $ and $\chi$ is  not constant on  $\varpi^{-1} \O_F$.}
\end{align*} 
Let $\Skew(\infty, \FF)$ denote the subspace of $\Skew(\N, F)$ consisting of matrices whose all but a finite number of  coefficients are zeros. The characteristic function $\widehat{\mu}$ for any probability measure $\mu \in \mathcal{P}(\Skew(\N, F))$ is defined on $\Skew(\infty,F)$ by
\begin{align*}
\widehat{\mu} (X): =  \int_{\Skew(\N,\FF)} \chi( \tr(XS)) \mu(dS).
\end{align*}

 The following  lemma is elementary. For reader's convenience, we include its routine proof in Appendix. 
\begin{lem}\label{lem-stan-form}
Every matrix $A \in \Skew(2 n, \FF)$ can be written in the form
\begin{align*}
A = g \cdot \diag (\varpi^{-k_1}J , \varpi^{-k_2}J , \cdots, \varpi^{-k_n}J ) \cdot g^t, \quad g \in \GL(2n, \O_F),
\end{align*}
where $k_1, \cdots, k_n\in\Z\cup\{-\infty\}$ and  $k_1 \ge k_2 \ge \cdots \ge k_n \ge - \infty$. \end{lem}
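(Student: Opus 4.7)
I would prove the lemma by induction on $n$, treating it as a "Smith-type" normal form for the congruence action $A \mapsto gAg^t$ on $\Skew(2n, F)$. The key tool is the observation that the max absolute value $|A| := \max_{i,j} |A_{ij}|$ is invariant under congruence by $\GL(\O_F)$-elements, since $|gAg^t| \le |A|$ and the bound goes both ways by invertibility.

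\textbf{Base case $n=1$.} Write $A = \begin{pmatrix} 0 & a \\ -a & 0 \end{pmatrix}$. If $a=0$, set $k_1 = -\infty$. Otherwise factor $a = \varpi^{-k_1} u$ with $u \in \O_F^\times$ and take $g = \diag(1, u^{-1})$; a direct computation yields $gAg^t = \varpi^{-k_1} J$.

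\textbf{Inductive step.} If $A=0$, set every $k_j = -\infty$. Otherwise let $k_1 \in \Z$ be determined by $|A| = q^{k_1}$. Since $A$ is skew-symmetric the diagonal vanishes, so the maximum is attained at some off-diagonal entry $A_{ij} = \varpi^{-k_1} u$, $u \in \O_F^\times$.

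Step~1 (\emph{normalize the pivot}). Conjugate by a permutation matrix to move this entry to position $(1,2)$, then apply the diagonal congruence $\diag(1, u^{-1}, 1, \ldots, 1)$ to make $A_{1,2} = \varpi^{-k_1}$, hence $A_{2,1} = -\varpi^{-k_1}$.

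Step~2 (\emph{clear rows/columns $1$ and $2$}). For each $j \ge 3$, I use the congruence $g = I + \lambda E_{j,2}$ with $\lambda = -\varpi^{k_1} A_{1,j}$. A short check shows
\[
(gAg^t)_{1,j} = A_{1,j} + \lambda A_{1,2} = 0,
\]
while the bound $|A_{1,j}| \le q^{k_1}$ gives $\lambda \in \O_F$; by skew-symmetry $(gAg^t)_{j,1} = 0$ as well, and row~$1$ is untouched elsewhere. An analogous elementary congruence $I + \mu E_{j,1}$, with $\mu \in \O_F$ chosen to make $(gAg^t)_{2,j} = 0$, clears $A_{2,j}$ without altering row $1$. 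After finitely many such steps, $A$ has the block form
\[
A = \begin{pmatrix} \varpi^{-k_1} J & 0 \\ 0 & A' \end{pmatrix}, \qquad A' \in \Skew(2(n-1), F).
\]

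Step~3 (\emph{induct and check the ordering}). Throughout Steps 1--2 the constants used lie in $\O_F$ and the entries of $A$ stay bounded by $q^{k_1}$, so $|A'| \le q^{k_1}$. By the induction hypothesis there exists $g' \in \GL(2(n-1), \O_F)$ with $g' A' (g')^t = \diag(\varpi^{-k_2} J, \ldots, \varpi^{-k_n} J)$ and $k_2 \ge \cdots \ge k_n \ge -\infty$. Since the max absolute value is invariant under $\GL(\O_F)$-congruence, $q^{k_2} = |A'| \le q^{k_1}$, giving $k_1 \ge k_2$. Composing the three $g$'s inside $\GL(2n, \O_F)$ gives the desired decomposition.

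\textbf{Expected difficulty.} There is no deep obstacle; the only care needed is bookkeeping, namely verifying that each cleaning operation (i) leaves the already-cleared entries of row/column $1$ (resp.\ $2$) untouched and (ii) uses only scalars in $\O_F$. Both are controlled by the single bound $|A_{\bullet,\bullet}| \le |A_{1,2}| = q^{k_1}$, which is precisely why the pivot $A_{1,2}$ had to be chosen with maximal absolute value. The use of $\ch(F) \ne 2$ is not needed here (it intervenes later in the paper), so the lemma is a purely Euclidean-algorithm-style reduction over the local ring $\O_F$.
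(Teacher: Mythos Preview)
Your proof is correct and follows essentially the same inductive strategy as the paper: pick an entry of maximal absolute value, permute it to position $(1,2)$, clear the rest of rows/columns $1$ and $2$ by $\O_F$-congruences, and recurse on the remaining $(2n-2)\times(2n-2)$ block. The paper packages the clearing step as a single block upper-triangular matrix (with $2\times 2$ blocks $x^{-1}JA_{1j}$) rather than your entry-by-entry transvections, and leaves the ordering $k_1\ge k_2$ implicit, but these are cosmetic differences.
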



\section{Outline of the proof of Theorem \ref{CT-skew}}

\subsection{Invariance and Ergodicity}\label{sec-inv-erg}

\begin{thm}\label{thm-erg-1}
For any $\mathbbm{k}\in \Delta$,  the probability measure $\mu_{\mathbbm{k}} $  is ergodic $\GL(\infty, \O_F)$-invariant. Moreover, for any two elements $\mathbbm{k}, \mathbbm{k}' \in \Delta$, 
 \begin{align}\label{unique-statement}
\text{$\mu_{\mathbbm{k}} = \mu_{\mathbbm{k}'}$  if and only if $\mathbbm{k} = \mathbbm{k}'$.}
  \end{align}
\end{thm}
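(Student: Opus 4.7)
The plan is to establish separately the three claims packaged in the theorem: (i) $\GL(\infty, \O_F)$-invariance of $\mu_{\mathbbm{k}}$, (ii) ergodicity, and (iii) the injectivity assertion \eqref{unique-statement}. For (i), the key observation is that every $g \in \GL(\infty, \O_F)$ differs from the identity only on a finite block, on which it acts as an $\O_F$-linear automorphism of determinant in $\O_F^\times$; therefore left multiplication by $g$ preserves Haar measure on $\O_F^\N$ and two-sided multiplication $A \mapsto gAg^t$ preserves the product Haar measure on $\Skew(\N, F)$. Expanding
\begin{align*}
g A_{\mathbbm{k}} g^t = \sum_{n:\, k_n > k} \varpi^{-k_n} \bigl[(gX^{(n)})_i (gY^{(n)})_j - (gX^{(n)})_j (gY^{(n)})_i\bigr]_{i,j} + \varpi^{-k} gZg^t,
\end{align*}
each summand is independent of the others and equidistributed with its counterpart in $A_{\mathbbm{k}}$, so $g_*\mu_{\mathbbm{k}} = \mu_{\mathbbm{k}}$.

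For (ii), I would use that the symmetric group $S_\infty$ of finitary permutations embeds in $\GL(\infty, \O_F)$ through permutation matrices, reducing ergodicity to $S_\infty$-ergodicity. The matrix $A_{\mathbbm{k}}$ is a measurable function of the i.i.d. family $\omega = (X_i^{(n)}, Y_i^{(n)}, Z_{ij})$ of uniform $\O_F$-valued variables, and the simultaneous row-column permutation action on $\Skew(\N, F)$ lifts through this function to the obvious permutation of indices on $\omega$. Therefore every $S_\infty$-invariant Borel set in $\Skew(\N, F)$ pulls back to an exchangeable event of an i.i.d. family, and the Hewitt--Savage 0-1 law finishes the job.

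For (iii), I would apply Lemma \ref{lem-stan-form} to the top-left $2n \times 2n$ corner $A^{(2n)}_{\mathbbm{k}}$, producing a measurable tuple $(\kappa_1^{(n)} \ge \cdots \ge \kappa_n^{(n)})$ of parameters in $\Z \cup \{-\infty\}$, and then prove that $\mu_{\mathbbm{k}}$-almost surely $\kappa_j^{(n)} \to k_j$ for every fixed $j$ as $n\to\infty$. Such a result recovers $\mathbbm{k}$ from $\mu_{\mathbbm{k}}$ and yields \eqref{unique-statement}. The main obstacle, and the technical core of the whole theorem, lies here: one must show that each rank-one summand $\varpi^{-k_j}[X^{(j)}(Y^{(j)})^t - Y^{(j)}(X^{(j)})^t]$ with $k_j > k$ contributes, with probability tending to $1$, exactly one $\varpi^{-k_j} J$ block to the standard form (this requires the first $2n$ coordinates of $X^{(j)}, Y^{(j)}$ to span a rank-$2$ $\O_F$-submodule up to units), while the bulk term $\varpi^{-k} Z$ together with the tail of summands with $k_n$ close to $k$ contribute only blocks with parameter $\le k$. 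This analysis is the skew-symmetric counterpart of the Vershik--Kerov-style argument in \cite{BQ-sym}, which the author cites as the main technical reference.
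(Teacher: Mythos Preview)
Your treatment of invariance (i) matches the paper's. For ergodicity (ii) the paper takes a slightly different route: rather than pulling back to the source $\omega$-space and invoking a $0$--$1$ law there, it uses Lemma~\ref{lem-exchange} to show that $A\mapsto (A(2i-1,2i))_{i\ge 1}$ induces an \emph{injective} affine map $\Psi\colon\mathcal{P}_{\mathrm{inv}}(\Skew(\N,F))\to\mathcal{P}^{S(\infty)}_{\mathrm{inv}}(F^\N)$. Since the entries $A_{\mathbbm{k}}(2i-1,2i)$ are manifestly i.i.d.\ in $i$, the image $\Psi(\mu_{\mathbbm{k}})$ is extreme by De~Finetti, and injectivity of $\Psi$ then forces $\mu_{\mathbbm{k}}$ to be extreme in $\mathcal{P}_{\mathrm{inv}}(\Skew(\N,F))$. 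Your direct argument also works, but note that the family $\omega$ includes the array $(Z_{ij})_{i<j}$ on which $S_\infty$ acts diagonally in both indices, so the ``Hewitt--Savage $0$--$1$ law'' you quote is not the classical one-dimensional statement; the needed extension to i.i.d.\ families with a joint row--column action is routine but should be stated. The paper's route trades this for having to prove that $\Psi$ is injective.

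The main divergence is in (iii). The paper does \emph{not} recover $\mathbbm{k}$ by analysing the canonical form of finite corners of $A_{\mathbbm{k}}$. Instead it computes the characteristic function explicitly (Proposition~\ref{prop-explicit}),
\[
\widehat{\mu_{\mathbbm{k}}}(\varpi^{-\ell}e_{11})=\exp\Bigl(-2\log q\sum_{n\ge 1}(k_n+\ell-\ell_0)\,\1_{\{k_n+\ell\ge\ell_0\}}\Bigr),
\]
and then invokes \cite[Lemma~5.2]{BQ-sym}, which says that this function of $\ell\in\Z$ determines $\mathbbm{k}$ uniquely. That is far shorter than the almost-sure convergence $\kappa_j^{(n)}\to k_j$ you propose: the latter, while true and in the Vershik--Kerov spirit, requires a genuine probabilistic argument controlling how the rank-two summands and the bulk term $\varpi^{-k}Z$ interact in the canonical form of the $2n\times 2n$ corner. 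In the paper that kind of asymptotic analysis is deferred to Theorem~\ref{thm-asy-mul} and Theorem~\ref{ct-ns}, where it is needed anyway to prove \emph{completeness} of the list $\{\mu_{\mathbbm{k}}\}$; for the mere injectivity statement \eqref{unique-statement} the characteristic-function route is the economical choice.
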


The fact that $\mu_{\mathbbm{k}}$'s are $\GL(\infty, \O_F)$-invariant is simple and its proof is similar to that of \cite[Proposition 3.1 and Proposition 3.9]{BQ-sym}. 

The assertion \eqref{unique-statement} is an immediate consequence of \cite[Lemma 5.2]{BQ-sym} and Proposition \ref{prop-explicit} below. 

For proving that $\mu_{\mathbbm{k}}$'s are $\GL(\infty, \O_F)$-ergodic, we use again an argument of Okounkov and Olshanski in \cite{OkOl}: that is, the ergodicity of $\mu_{\mathbbm{k}}$ is derived from the De Finetti Theorem. For the detail, the reader is referred to the proof of \cite[Theorem 3.5]{BQ-sym}. A slight difference appears here is:  instead of using \cite[Lemma 3.6]{BQ-sym}, we use the following Lemma \ref{lem-exchange}, whose proof is simple and routine.

\begin{lem}\label{lem-exchange}
The map $A \mapsto (A(2i-1, 2i))_{i=1}^\infty$ from $\Skew(\N, F)$ to $F^\N$ induces an affine embedding  
\begin{align*}
\Psi: \mathcal{P}_{\mathrm{inv}}(\Skew(\N, F)) \rightarrow \mathcal{P}^{S(\infty)}_{\mathrm{inv}}(F^\N),
\end{align*}
where $S(\infty):  = \bigcup_{n\in\N} S(n)$ is the union of the group $S(n)$ of permutations of $\{1, 2, \cdots, n\}$ and  $S(\infty)$ acts on $F^\N$ by permutations of coordinates.   
\end{lem}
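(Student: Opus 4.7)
My plan splits the assertion into three claims: $\Psi$ is affine, its image lies in $\mathcal{P}^{S(\infty)}_{\mathrm{inv}}(F^\N)$, and it is injective on $\mathcal{P}_{\mathrm{inv}}(\Skew(\N, F))$. Affineness is immediate because pushforward of measures is an affine operation. For the $S(\infty)$-invariance of the image, given $\sigma \in S(\infty)$ I would write down the block-permutation $g_\sigma \in \GL(\infty, \O_F)$ sending $e_{2i-1} \mapsto e_{2\sigma(i)-1}$ and $e_{2i} \mapsto e_{2\sigma(i)}$ (and fixing basis vectors outside the support of $\sigma$). A direct check shows that the transformation $A \mapsto g_\sigma A g_\sigma^t$ permutes the pair-entries $(A(2i-1, 2i))_i$ exactly according to $\sigma$, so the $\GL(\infty, \O_F)$-invariance of $\mu$ pushes forward to the required $S(\infty)$-invariance of $\Psi(\mu)$.

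The heart of the argument is injectivity, which I would handle via characteristic functions and the standard form of Lemma \ref{lem-stan-form}. For any $\GL(\infty, \O_F)$-invariant $\mu$, a change of variable under the integral gives $\widehat{\mu}(X) = \widehat{\mu}(h^t X h)$ for every $h \in \GL(\infty, \O_F)$ and every $X \in \Skew(\infty, F)$. Given such an $X$, I pick $n$ with $X \in \Skew(2n, F)$ and apply Lemma \ref{lem-stan-form} to write $X = g D g^t$ with $g \in \GL(2n, \O_F)$ and $D = \diag(\varpi^{-k_1}J, \ldots, \varpi^{-k_n}J)$; the choice $h = (g^{-1})^t$ then collapses $\widehat{\mu}(X)$ to $\widehat{\mu}(D)$. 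Using the skew-symmetry $A(2i, 2i-1) = -A(2i-1, 2i)$, I would expand
\[
\tr(DA) = -2\sum_{i=1}^{n} \varpi^{-k_i} A(2i-1, 2i),
\]
so that
\[
\widehat{\mu}(D) = \widehat{\Psi(\mu)}\bigl(-2\varpi^{-k_1}, \ldots, -2\varpi^{-k_n}, 0, 0, \ldots\bigr),
\]
which depends on $\mu$ only through $\Psi(\mu)$. Consequently $\Psi(\mu) = \Psi(\mu')$ forces $\widehat{\mu} = \widehat{\mu'}$ on all of $\Skew(\infty, F)$, and since the characteristic function on $\Skew(\infty, F)$ uniquely determines any Borel probability measure on $\Skew(\N, F)$, we conclude $\mu = \mu'$.

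The only delicate point is ensuring that the scalar $-2$ in the trace formula is nonzero in $F$, which is precisely the role of the global hypothesis $\ch(F) \ne 2$; if $\ch(F)$ were $2$ then $\widehat{\mu}(D)$ would collapse to the trivial constant $1$ and the bridge to $\Psi(\mu)$ would carry no information. If ``embedding'' is to be read topologically, continuity of $\Psi$ follows from the continuous mapping theorem applied to the coordinate projections $A \mapsto (A(2i-1, 2i))_i$, so no extra work is needed beyond what is stated above.
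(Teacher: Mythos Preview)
Your proof is correct. The paper itself does not supply a proof of this lemma, declaring it ``simple and routine,'' so there is no argument to compare against; your approach via the block-permutation matrices $g_\sigma$ for the $S(\infty)$-invariance and via Lemma~\ref{lem-stan-form} together with the trace identity $\tr(DA) = -2\sum_i \varpi^{-k_i} A(2i-1,2i)$ for injectivity is exactly the natural way to fill in the details, and your remark on the necessity of $\ch(F)\ne 2$ is apt.
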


\subsection{Computation of characteristic functions}

For any $\mathbbm{k}$ in $\Delta$, since $\mu_{\mathbbm{k}}$ is $\GL(\infty, \O_F)$-invariant, so does $\widehat{\mu_{\mathbbm{k}}}$.  Consequently,  it suffices to compute
 \begin{align*}
  \widehat{\mu_{\mathbbm{k}}} (  \diag (\varpi^{-\ell_1} J , \cdots, \varpi^{-\ell_r}J , 0, \cdots))
 \end{align*}
for any $r\in\N$ and any $\ell_1, \cdots, \ell_r \in\Z$.

Define a function $\Theta: F \rightarrow \C$ by 
\begin{align*}
\Theta(x) : =  \int\limits_{\Mat(2, \O_F)} \chi\Big(  x \cdot \tr (Y J Y^t J) \Big) dY.
\end{align*}
Since $\ch(F)\ne 2$, there exists a unique $\ell_0\in \N\cup\{0\}$, such that 
\begin{align}\label{2-ord}
|2| = q^{-\ell_0}. 
\end{align}

\begin{lem}\label{lem-theta}
The function $\Theta$ is given by 
\begin{align*}
\Theta(x)  = \exp(   -2  \log q \cdot (\ell-\ell_0) \1_{\ell  \ge \ell_0}),  \text{if $|x| =q^{\ell}$}.
\end{align*}
\end{lem}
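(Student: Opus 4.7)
\medskip

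The plan is to reduce the matrix integral to a standard Fourier-theoretic calculation on $\O_F$, via a trace identity.

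First I would carry out the elementary matrix computation: writing $Y=\bigl[\begin{smallmatrix}a & b\\ c & d\end{smallmatrix}\bigr]$, a direct calculation shows $YJY^tJ = -\det(Y)\cdot I_2$, hence
\[
\tr(YJY^tJ) = -2\det(Y) = -2(ad-bc).
\]
This identifies $\Theta(x)$ as the Fourier transform of the distribution of $\det Y$ (up to the factor $-2$):
\[
\Theta(x) = \int_{\O_F^4} \chi\bigl(-2x(ad-bc)\bigr)\,da\,db\,dc\,dd.
\]

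Next I would recall the elementary self-duality fact for our chosen $\chi$: since $\chi|_{\O_F}\equiv 1$ and $\chi$ is nontrivial on $\varpi^{-1}\O_F$, one has
\[
\int_{\O_F}\chi(ua)\,da = \1_{u\in\O_F}, \qquad u\in F.
\]
Applying this to integrate out $a$ (fixing $d$) and independently to integrate out $b$ (fixing $c$), the integral factors as
\[
\Theta(x) = \left(\int_{\O_F} \1\{2xd\in\O_F\}\,dd\right)\!\left(\int_{\O_F} \1\{2xc\in\O_F\}\,dc\right) = I(x)^2,
\]
where $I(x) := \int_{\O_F}\1_{\{2xd\in\O_F\}}\,dd$.

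Finally I would evaluate $I(x)$. Setting $|x|=q^\ell$, we get $|2x|=q^{\ell-\ell_0}$ by \eqref{2-ord}. If $\ell\le \ell_0$, then $2x\in\O_F$, so the indicator is identically $1$ and $I(x)=1$. If $\ell>\ell_0$, then $\{d\in\O_F:2xd\in\O_F\}=\varpi^{\ell-\ell_0}\O_F$, a subgroup of index $q^{\ell-\ell_0}$, hence $I(x)=q^{-(\ell-\ell_0)}$. In either case
\[
I(x) = q^{-(\ell-\ell_0)\1_{\ell\ge\ell_0}},
\]
which upon squaring yields the claimed formula.

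There is no real obstacle here: the only non-routine ingredient is the trace identity $\tr(YJY^tJ)=-2\det Y$, which collapses the problem from a quadratic-form computation to a determinant computation, after which everything is a standard Haar-measure calculation over $\O_F$. The hypothesis $\ch(F)\ne 2$ plays no role in this lemma beyond ensuring that $\ell_0$ is a well-defined finite integer.
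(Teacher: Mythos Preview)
Your proof is correct and follows essentially the same route as the paper: both use the identity $\tr(YJY^tJ)=-2\det Y$ to reduce to an integral over $\O_F^4$, then factor and evaluate using the self-duality property of $\chi$. The only cosmetic difference is that you integrate out one variable at a time to land on an indicator function before the final Haar-measure count, whereas the paper states the value of the double integral $\int_{\O_F^2}\chi(2xy_1y_2)\,dy_1\,dy_2$ directly.
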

\begin{proof}
By a direct computation, we have 
\begin{align*}
\Theta(x)  =  &\int\limits_{\Mat(2, \O_F)} \chi(  -2 x  \cdot \det Y ) dY
\\
=&  \int\limits_{\O_F^4} \chi(  -2 x (y_1y_2 - y_3y_4) ) dy_1dy_2 dy_3 dy_4
\\
 =& \int\limits_{\O_F^2} \chi(  -2 x y_1y_2)  dy_1dy_2\int\limits_{\O_F^2} \chi(  2 x  y_3y_4 ) dy_3 dy_4 
 \\
 =& \Big| \int\limits_{\O_F^2} \chi(  2 x y_1y_2  ) dy_1dy_2\Big|^2 
 = \left\{\begin{array}{cl} 1 &\text{if $| 2x|\le 1$} \\ \frac{1}{|2x|^2} & \text{if $| 2x| > 1$ }    \end{array} \right..
\end{align*}
Taking \eqref{2-ord} into account,  we get the desired identity.
\end{proof}

Using Lemma \ref{lem-theta} and by direct computations, we obtain the following
\begin{prop}\label{prop-explicit}
Let $\mathbbm{k} = (k_n)_{n\in\N}\in \Delta$. Then any $\ell \in \Z$, we have  
 \begin{align*}
 \widehat{\mu_{\mathbbm{k}}}(\varpi^{-\ell} e_{11}) =    \exp\Big(- 2 \log q \cdot   \sum_{n=1}^\infty (k_n+\ell-\ell_0)  \1_{\{  k_n+\ell  \ge \ell_0\}}\Big).
 \end{align*}
More generally, for  any $\ell_1, \cdots, \ell_r\in\Z$, we have 
 \begin{align*}
 \widehat{\mu_{\mathbbm{k}}}(\diag (\varpi^{-\ell_1}, \cdots, \varpi^{-\ell_r},  0, \cdots)) = \prod_{ i=1}^r    \widehat{\mu_{\mathbbm{k}}}(\varpi^{-\ell_i} e_{11}).
 \end{align*}
\end{prop}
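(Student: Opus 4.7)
The plan is to exploit the independence of the rank-$2$ summands and of $Z$ in Definition~\ref{defn-rm} to factorize $\widehat{\mu_{\mathbbm{k}}}$, then apply Lemma~\ref{lem-theta} termwise and consolidate. Throughout let $\widetilde{X}:=\varpi^{-\ell}\diag(J,0,0,\ldots)\in\Skew(\infty,F)$ denote the skew matrix encoded by the shorthand ``$\varpi^{-\ell}e_{11}$'' in the statement.

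First, each rank-$2$ summand can be rewritten as $V_n J V_n^t$, where $V_n=[X^{(n)}\mid Y^{(n)}]$ is the $\N\times 2$ matrix with columns $X^{(n)}$ and $Y^{(n)}$. A short cyclic-trace computation gives $\tr(\widetilde{X}\,V_n J V_n^t)=\varpi^{-\ell}\tr(J\,V_n^{\mathrm{top}}\,J\,(V_n^{\mathrm{top}})^t)$, where $V_n^{\mathrm{top}}\in\Mat(2,\O_F)$ is the top $2\times 2$ sub-block of $V_n$ and has i.i.d.\ uniform entries. Hence, by Lemma~\ref{lem-theta}, the $n$-th factor of the characteristic function equals $\Theta(\varpi^{-(k_n+\ell)})=\exp(-2\log q\cdot(k_n+\ell-\ell_0)\1_{k_n+\ell\ge\ell_0})$. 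For the $\varpi^{-k}Z$ term, one computes $\tr(\widetilde{X}Z)=-2\varpi^{-\ell}Z_{12}$ with $Z_{12}$ uniform on $\O_F$, so its contribution is $\int_{\O_F}\chi(-2\varpi^{-(k+\ell)}z)\,dz=\1_{k+\ell\le\ell_0}$ when $k\in\Z$, and is trivially $1$ when $k=-\infty$ (by the convention $\varpi^{-\infty}=0$). Combining via independence and continuity of $\chi$,
\[
\widehat{\mu_{\mathbbm{k}}}(\widetilde{X})=\prod_{n:\,k_n>k}\Theta(\varpi^{-(k_n+\ell)})\cdot\1_{k+\ell\le\ell_0},
\]
with the indicator factor understood as $1$ when $k=-\infty$.

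A brief case analysis on $k$ reconciles this product with the closed form in the proposition: when $k\in\Z$, the sequence $(k_n)$ stabilizes at $k$, so the infinitely many tail terms $(k+\ell-\ell_0)\1_{k+\ell\ge\ell_0}$ in $\sum_{n\ge 1}(k_n+\ell-\ell_0)\1_{k_n+\ell\ge\ell_0}$ contribute $0$ when $k+\ell\le\ell_0$ and $+\infty$ when $k+\ell>\ell_0$, exactly reproducing the indicator factor; when $k=-\infty$ only finitely many indicators are nonzero and the sum reduces directly to the product. For the general block-diagonal test matrix $X=\diag(\varpi^{-\ell_1}J,\ldots,\varpi^{-\ell_r}J,0,\ldots)$, the distinct $J$-blocks of $X$ are supported on disjoint coordinate pairs of $V_n$ and of $Z$, which are mutually independent; so the trace splits into independent pieces and the characteristic function factorizes into $\prod_{i=1}^r \widehat{\mu_{\mathbbm{k}}}(\varpi^{-\ell_i}e_{11})$. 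The main bookkeeping obstacle is the unified encoding of the $Z$-indicator inside the single exponential, especially at the borderline $k+\ell=\ell_0$ and in ensuring compatibility with the convention $\varpi^{-\infty}=0$.
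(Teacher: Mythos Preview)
Your argument is correct and is precisely the ``direct computation'' the paper alludes to: factorize via the independence built into Definition~\ref{defn-rm}, recognize each rank-two block contribution as $\Theta(\varpi^{-(k_n+\ell)})$ via Lemma~\ref{lem-theta}, identify the $Z$-contribution as the indicator $\1_{k+\ell\le\ell_0}$, and then observe that the tail terms in $\sum_{n\ge1}(k_n+\ell-\ell_0)\1_{k_n+\ell\ge\ell_0}$ encode that indicator. Your reading of ``$\varpi^{-\ell}e_{11}$'' and ``$\diag(\varpi^{-\ell_1},\ldots)$'' as the corresponding $J$-block skew matrices is the intended one, and your treatment of the borderline $k+\ell=\ell_0$ and of the convention $\varpi^{-\infty}=0$ is accurate.
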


\subsection{Vershik-Kerov ergodic method}
For notational convenience, we will use the identity
$$
\GL(\infty, \O_F) = \lim_{\longrightarrow} \GL(n, \O_F) = \lim_{\longrightarrow} \GL(2n, \O_F).
$$

\begin{defn}
 (i) For  any $n\in\N$ and any $A\in\Skew(\N, F)$, we define the $\GL(2n, \O_F)$-orbital measure generated by $A$,  denoted by $m_n(A)$, as the image of the normalized Haar measure of $\GL(2n, \O_F)$ under the map  $g \mapsto g \cdot x$ from $\GL(2n, \O_F)$ to $\Skew(\N, F)$.

(ii) For any $n\in\N$, let $\ORB_{n}(\Skew(2n, F))$ denote the set of all orbital measures $m_n(A)$,  where $A$ ranges over the space $\Skew(2n, F)$.
 
 (ii) Let $\ORB_\infty(\Skew(\N, F))$ be the set of probability measures $\mu$ on $\Skew(\N, F)$ such that there exists a sequence of positive integers $n_1 < n_2 < \cdots$ and  a sequence $(\mu_{n_k})_{k\in\N}$ of orbital measures  with  $\mu_{n_k} \in \ORB_{n_k} (\Skew(2 n_k, F)) $, so that  $\mu_{n_k} \Longrightarrow \mu$. 
\end{defn}

 As a variant of Vershik's Theorem, we have
\begin{thm}[Vershik]\label{Vershik-thm}
The following inclusion holds:
\begin{align*}
\mathcal{P}_{\mathrm{erg}}(\Skew(\N, F)) \subset \ORB_{\infty}(\Skew(\N, F)). 
\end{align*} 
\end{thm}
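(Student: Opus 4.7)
The plan is Vershik's classical argument adapted to the present setting. Fix $\mu \in \mathcal{P}_{\mathrm{erg}}(\Skew(\N, F))$ and for each $n \in \N$ let $\mathcal{A}_n$ denote the sub-$\sigma$-algebra of Borel subsets of $\Skew(\N, F)$ that are invariant under the $\GL(2n, \O_F)$-action $A \mapsto gAg^t$. The sequence $(\mathcal{A}_n)_{n\ge 1}$ is decreasing, and its intersection equals the $\sigma$-algebra of $\GL(\infty, \O_F)$-invariant Borel sets, which is $\mu$-trivial by ergodicity.

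For any bounded measurable $f: \Skew(\N, F) \to \C$, set
$$
F_n(A) := \int f \, dm_n(A) = \int_{\GL(2n, \O_F)} f(g A g^t) \, dg.
$$
By construction $F_n$ is $\mathcal{A}_n$-measurable, and a short Fubini computation using the $\GL(2n, \O_F)$-invariance of $\mu$ yields $F_n = \E_\mu[f \mid \mathcal{A}_n]$ $\mu$-almost surely. Doob's reverse martingale theorem then gives $F_n(A) \longrightarrow \int f\, d\mu$ for $\mu$-almost every $A$.

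To upgrade this pointwise convergence to weak convergence $m_n(A) \Longrightarrow \mu$, I would fix a countable family $\{f_k\}_{k \in \N}$ of bounded continuous functions on $\Skew(\N, F)$ that determines weak convergence. A convenient choice is the collection of characters $A \mapsto \chi(\tr(XA))$ with $X$ ranging over a countable dense subset of $\Skew(\infty, F)$, together with cylindrical cutoffs reducing each test to the locally compact finite-dimensional spaces $\Skew(2m, F)$; the non-Archimedean analogue of L\'evy's continuity theorem ensures that this family is convergence-determining. Taking the intersection over $k$ of the $\mu$-conull sets provided by the previous paragraph, one obtains a single $\mu$-conull subset of $\Skew(\N, F)$ on which $m_n(A) \Longrightarrow \mu$; this places $\mu$ inside $\ORB_\infty(\Skew(\N, F))$.

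The main obstacle is the identification $F_n = \E_\mu[f \mid \mathcal{A}_n]$, which is where the $\GL(2n, \O_F)$-invariance of $\mu$ enters substantively via a Fubini/change-of-variables step; once this is established, everything else reduces to standard reverse-martingale convergence and a routine tightness/separability argument. A secondary point of care is the explicit construction of the countable convergence-determining class, since $C_b(\Skew(\N, F))$ is not sup-norm separable—this is exactly why I combine characters of $F$ with cylindrical approximations, which reduces the verification to the locally compact spaces $\Skew(2m, F)$ where separability of $C_0$ is classical.
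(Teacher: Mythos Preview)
The paper does not supply a proof of this statement; it is recorded as a variant of Vershik's theorem and taken as known. Your reverse-martingale argument is the standard one and is correct in outline.

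One point requires a sentence you have not written. By definition, $\ORB_\infty(\Skew(\N,F))$ consists of weak limits of orbital measures $m_{n_k}(A_k)$ with $A_k\in\Skew(2n_k,F)$, i.e.\ \emph{finite} matrices. Your argument produces, for $\mu$-a.e.\ infinite $A\in\Skew(\N,F)$, the convergence $m_n(A)\Longrightarrow\mu$; but $m_n(A)\notin\ORB_n(\Skew(2n,F))$ in general. The fix is immediate: for $g\in\GL(2n,\O_F)$ the upper-left $2n\times 2n$ block of $gAg^t$ depends only on the upper-left $2n\times 2n$ block $A^{(n)}$ of $A$, so $m_n(A)$ and $m_n(A^{(n)})$ have the same $\Skew(2n,F)$-marginal. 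Hence for every bounded continuous cylinder function $f$ depending on at most the $2m\times 2m$ corner and every $n\ge m$ one has $\int f\,dm_n(A)=\int f\,dm_n(A^{(n)})$, and therefore $m_n(A^{(n)})\Longrightarrow\mu$ as well, which is what membership in $\ORB_\infty$ requires.

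Your discussion of the convergence-determining family is more elaborate than needed: since $\Skew(\N,F)$ is Polish, the weak topology on $\mathcal{P}(\Skew(\N,F))$ is metrizable and separable, so a countable determining class exists for free; alternatively, bounded continuous cylinder functions with rational data already suffice.
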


\subsection{Asymptotic formulas for orbital integrals}\label{sec-ch}

By Lemma \ref{lem-stan-form}, any $\mu\in \ORB_{n}(\Skew(2n, F))$ can be written in the form: 
\begin{align*}
\mu = m_n(\diag (x_1 J , \cdots, x_n J )).
\end{align*}
By the $\GL(2n, \O_F)$-invariance of $\widehat{\mu}$, it suffices to compute  the following orbital integrals: 
\begin{align}\label{def-DA}
   \widehat{m_n(D)} (A) = \int_{\GL(2n, \O_F)}    \chi   (\tr ( gD g^t  A  )) d g,
\end{align}
 where $D$ and $A$ are two skew-symmetric matrices given by: 
\begin{align}\label{D-an-A}
D = \diag (x_1 J , \cdots, x_n J ),  A = \diag(a_1 J , \cdots, a_r J , 0, \cdots),
\end{align}
with $ r \le n$ and $x_1, \cdots, x_n, a_1, \cdots, a_r \in F$.

\begin{thm}\label{thm-asy-mul}
Let $n, r \in \N$ be such that $r \le n $.  Suppose that $D$ and $A$ are two skew-symmetric matrices given by \eqref{D-an-A}. Then
\begin{align}\label{nsym-re}
&  \Big|  \widehat{m_n(D)} (A)  -  \prod_{i = 1}^r \prod_{j=1}^n \Theta (a_i x_j)   \Big| \le  2  \cdot  (1  - \prod_{w = 0}^{2r-1} (1 - q^{w-2n})), 
 \end{align}
where $dg$ is the normalized Haar measure on  $\GL(2n, \O_F)$. 
\end{thm}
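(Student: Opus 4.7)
The plan is to decouple the orbital integral into a product by (i)~restricting to the top $2r$ rows of $g$, (ii)~replacing the pushforward of the Haar measure by the uniform measure at a controlled cost, and (iii)~factorizing the resulting integral over $2 \times 2$ blocks.

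\emph{Reduction to the top rows.} Since the only nonzero block of $A$ is its top-left $2r \times 2r$ corner, a direct expansion yields $\tr(g D g^t A) = \tr(Y D Y^t A')$, where $Y \in \Mat(2r \times 2n, \O_F)$ denotes the first $2r$ rows of $g$ and $A' := \diag(a_1 J, \ldots, a_r J) \in \Skew(2r,F)$. Consequently the integrand in \eqref{def-DA} depends on $g$ only through $Y$.

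\emph{Haar vs.\ uniform.} The normalized Haar measure on $\GL(2n, \O_F)$ is the uniform measure on $\Mat(2n, \O_F)$ conditioned on invertibility, so a counting argument over the residue field $\F_q$ (using that $g \in \GL(2n, \O_F)$ iff $\bar{g} \in \GL(2n, \F_q)$, together with a completion count) shows that the pushforward $\nu$ of the Haar measure under $g \mapsto Y$ has density
$$
\rho(Y) = \frac{\1[\bar{Y} \in \Mat(2r \times 2n, \F_q) \text{ has rank } 2r]}{\prod_{w=0}^{2r-1}(1 - q^{w-2n})}
$$
with respect to the uniform measure $du$ on $\Mat(2r \times 2n, \O_F)$. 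A short computation then gives $\int |\rho - 1|\, du = 2\bigl(1 - \prod_{w=0}^{2r-1}(1 - q^{w-2n})\bigr)$, so since the integrand has modulus at most one, replacing $\nu$ by $du$ in the orbital integral costs at most the right-hand side of \eqref{nsym-re}.

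\emph{Factorization.} Partition $Y$ into $2 \times 2$ blocks $Y_{i,j}$ indexed by a row-block $i \in \{1, \ldots, r\}$ and a column-block $j \in \{1, \ldots, n\}$; these $rn$ blocks are iid uniform on $\Mat(2, \O_F)$. The block-diagonal structures of $D$ and $A'$ yield
$$
\tr(Y D Y^t A') = \sum_{i=1}^{r} \sum_{j=1}^{n} a_i x_j \tr(Y_{i,j} J Y_{i,j}^t J),
$$
and since $\chi$ is an additive character the uniform integral factorizes into $\prod_{i,j} \Theta(a_i x_j)$ by the very definition of $\Theta$. The main technical step I anticipate is the density identification in (ii): it encodes the specific combinatorics of $\GL(2n, \O_F)$ and is directly responsible for the error factor $\prod_{w=0}^{2r-1}(1 - q^{w-2n})$; the remaining two steps are essentially formal once the rectangular-matrix picture is set up.
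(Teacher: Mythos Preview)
Your proposal is correct and follows essentially the same route as the paper: reduce to the top $2r$ rows, replace the Haar integral by the uniform integral at the stated cost, and factorize over $2\times 2$ blocks to obtain $\prod_{i,j}\Theta(a_ix_j)$. The only cosmetic difference is that the paper packages your step (ii) as a quoted black box (Proposition~\ref{prop-gl-mat}, whose proof is deferred to \cite{BQ-sym}), whereas you spell out the density $\rho$ and the total-variation identity $\int|\rho-1|\,du=2\bigl(1-\prod_{w=0}^{2r-1}(1-q^{w-2n})\bigr)$ directly; your explicit computation is precisely what that proposition encodes.
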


The proof of Theorem \ref{thm-asy-mul} is based on the following

\begin{prop}\label{prop-gl-mat}
Let $f: \Mat(n, \O_F)\rightarrow \C$ be a bounded measurable function. Assume that $f$ depends only on the  the $r \times n$ coefficients, that is, we have
\begin{align*}
f(X) = f([X_{ij}]_{1\le i \le r, 1\le j \le n}). 
\end{align*}
Then 
\begin{align*}
\Big| \int\limits_{\GL(n, \O_F)} f(g) dg -  \int\limits_{\Mat(n, \O_F)} f(X) dX\Big| \le 2  \|f\|_\infty \cdot  (1  - \prod_{w = 0}^{r-1} (1 - q^{w-n})).   
\end{align*}
where $dg$ and $dX$ are normalized Haar measures of $\GL(n, \O_F)$ and $\Mat(n, \O_F)$ respectively. 
\end{prop}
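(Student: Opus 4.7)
The plan is to compare $dg$ and $dX$ by analyzing their pushforwards under the projection $\pi : \Mat(n, \O_F) \to \Mat(r \times n, \O_F)$ that extracts the first $r$ rows. Since $f$ factors as $\widetilde{f} \circ \pi$ for some bounded $\widetilde{f}$ with $\|\widetilde{f}\|_\infty = \|f\|_\infty$, the two integrals in the statement are pairings of $\widetilde{f}$ with $\pi_*(dg)$ and $\pi_*(dX)$ respectively, so it suffices to bound the total variation distance of the two pushforward probability measures on $\Mat(r \times n, \O_F)$.

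The pushforward $\pi_*(dX)$ is manifestly the normalized additive Haar measure $d\xi$ on $\Mat(r \times n, \O_F)$. The key step is to identify $\pi_*(dg)$ as the uniform probability measure on
\begin{align*}
U_r := \{\xi \in \Mat(r \times n, \O_F) \, : \, \xi \bmod \varpi \text{ has rank } r \text{ over } \F_q\}.
\end{align*}
To see this, one uses the reduction map $\GL(n, \O_F) \twoheadrightarrow \GL(n, \F_q)$: the Haar measure upstairs pushes forward to the uniform measure on $\GL(n, \F_q)$, and each fiber, a coset of $1 + \varpi \Mat(n, \O_F)$, carries the restricted Haar uniformly. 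A classical count shows that every rank-$r$ matrix in $\Mat(r \times n, \F_q)$ admits the same number $\prod_{w=r}^{n-1}(q^n - q^w)$ of completions to an element of $\GL(n, \F_q)$, so the first-$r$-rows projection of uniform Haar on $\GL(n, \F_q)$ is uniform on rank-$r$ matrices; combined with the fiberwise uniformity upstairs, this gives the claim.

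The volume of $U_r$ inside $\Mat(r \times n, \O_F)$ equals the density of rank-$r$ matrices in $\Mat(r \times n, \F_q)$, which by a direct count is $c_r := \prod_{w=0}^{r-1}(1 - q^{w-n})$. Hence
\begin{align*}
\pi_*(dg) - \pi_*(dX) = (c_r^{-1} - 1)\, \1_{U_r}\, d\xi \; - \; \1_{U_r^c}\, d\xi,
\end{align*}
a signed measure of total variation exactly $(1 - c_r) + (1 - c_r) = 2(1 - c_r)$. Pairing it with $\widetilde{f}$ yields the claimed inequality. The only nontrivial step is the identification of $\pi_*(dg)$ as the uniform measure on $U_r$ via the mod-$\varpi$ reduction; once this and the formula for $c_r$ are in hand, the factor of $2$ in the bound arises automatically as the total variation distance between $\pi_*(dg)$ and $\pi_*(dX)$.
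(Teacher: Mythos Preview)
Your argument is correct. The paper itself omits the proof of this proposition, referring instead to \cite[Theorem 7.1]{BQ-sym} and \cite{Q-trunc}; the approach you take---reducing mod $\varpi$, using the equidistribution of the first $r$ rows of a uniform element of $\GL(n,\F_q)$ over full-rank $r\times n$ matrices, and then reading off the total variation $2(1-c_r)$---is exactly the standard route and is what those references carry out.
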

\begin{proof}
The proof of Proposition \ref{prop-gl-mat} is similar to that of \cite[Theorem 7.1]{BQ-sym}, here we omit the detail.
\end{proof}
\begin{rem}
Proposition \ref{prop-gl-mat} goes back to the author's note \cite{Q-trunc}.  
\end{rem}

\begin{proof}[Proof of Theorem \ref{thm-asy-mul}]
Fix $n, r \in\N$ and fix the two skew-symmetric matrices $D$ and $A$ given as in \eqref{D-an-A}. 
Define $f: \Mat(2n, \O_F)\rightarrow \C$ by the formula 
\begin{align*}
f(X)=  \chi   (\tr ( XD X^t  A  )). 
\end{align*}
Writing $X\in \Mat(2n, \O_F)$ in the form $X= [X_{ij}]_{1\le i, j \le n}$, 
where $X_{ij}$'s  are $2\times 2$ blocks. 
Expanding $\tr ( XD X^t  A  )$, we get
\begin{align*}
f(X)=  \prod_{i = 1}^r \prod_{j=1}^n \chi\Big(  a_i x_j  \cdot     \tr (X_{ij} J X_{ij}^t J)  \Big).
\end{align*}
This implies that $f$ only depends on the $2r \times 2n$ coefficients of $X$. Applying  Proposition \ref{prop-gl-mat}, we get 
\begin{align*}
 \Big| \int\limits_{\GL(2n, \O_F)}  f(g) dg -  \int\limits_{\Mat(2n, \O_F)} f(X) dX\Big| \le 2  \cdot  (1  - \prod_{w = 0}^{2r-1} (1 - q^{w-2n})).   
\end{align*}
The inequality \eqref{D-an-A} follows by observing that
\begin{align*}
&\int\limits_{\Mat(2n, \O_F)} f(X)  dX   =  \int\limits_{\Mat(2n, \O_F)}  \prod_{i = 1}^r \prod_{j=1}^n    \chi\Big(a_i x_j  \cdot     \tr (X_{ij} J X_{ij}^t J)  \Big) dX
\\
 &=  \prod_{1\le i \le r, 1 \le j \le n}   \int\limits_{\Mat(2, \O_F)}\chi\Big( a_i x_j  \cdot     \tr (YJ Y^t J)  \Big) dY
 =   \prod_{1\le i \le r, 1 \le j \le n} \Theta(a_i x_j). 
\end{align*}
\end{proof}


\subsection{Completion of classification of $\mathcal{P}_{\mathrm{erg}}(\Skew(\N, F))$}

\begin{lem}\label{lem-compact}
Let $(\mu_n)_{n\in\N}$ be a sequence of probability measures such that  $\mu_n\in\ORB_{n}(\Skew(2n, F))$. A necessary and sufficient condition for  $(\mu_n)_{n\in\N}$ to be  tight is the following:  
\begin{itemize}
\item[($C$)] There exists $\gamma \in\Z$ such that the supports $\supp(\mu_n)$ are all contained in the following compact subset of $\Skew(\N, F)$:   
\begin{align*}
\left\{X  \in \Skew(\N, F) \Big |   \text{$ | X_{ij}| \le q^\gamma, \forall i, j \in \N$}  \right\}.
\end{align*}
\end{itemize}
\end{lem}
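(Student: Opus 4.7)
The plan is to prove the two directions separately; sufficiency is essentially automatic, while necessity carries the content.

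For sufficiency, the set $K_\gamma := \{X \in \Skew(\N, F) : |X_{ij}| \le q^\gamma,\ \forall i, j\}$ is compact by Tychonoff's theorem, being homeomorphic (via $X \mapsto (X_{ij})_{i < j}$) to a countable product of compact balls in $F$. If every $\supp(\mu_n) \subset K_\gamma$, tightness is immediate.

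For necessity I would argue by contradiction. By Lemma \ref{lem-stan-form} and the $\GL(2n, \O_F)$-invariance of orbital measures, write $\mu_n = m_n(D_n)$ with
$$D_n = \diag\bigl(\varpi^{-k_1^{(n)}} J, \ldots, \varpi^{-k_n^{(n)}} J\bigr), \quad k_1^{(n)} \ge \cdots \ge k_n^{(n)} \ge -\infty.$$
The non-Archimedean triangle inequality gives $|(g D_n g^t)_{ij}| \le q^{k_1^{(n)}}$ for every $g \in \GL(2n, \O_F)$, so condition (C) is equivalent to $\sup_n k_1^{(n)} < \infty$. Assume for contradiction that $k_1^{(n_k)} \to \infty$ along a subsequence. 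By tightness and Prokhorov's theorem, pass to a further subsequence with $\mu_{n_k} \Longrightarrow \mu$ for some $\mu \in \mathcal{P}(\Skew(\N, F))$.

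The crux is to test against the single-block matrix $A_\ell := \varpi^{-\ell}(e_{12} - e_{21}) = \diag(\varpi^{-\ell} J, 0, \ldots, 0)$ for fixed $\ell \in \Z$. Weak convergence forces $\widehat{\mu_{n_k}}(A_\ell) \to \widehat{\mu}(A_\ell)$, since $S \mapsto \chi(\tr(A_\ell S))$ is bounded and continuous on $\Skew(\N, F)$. Theorem \ref{thm-asy-mul} applied with $r = 1$ and $a_1 = \varpi^{-\ell}$ yields
$$\widehat{\mu_{n_k}}(A_\ell) = \prod_{j=1}^{n_k} \Theta\bigl(\varpi^{-\ell - k_j^{(n_k)}}\bigr) + o(1),$$
and by Lemma \ref{lem-theta} the $j = 1$ factor equals $q^{-2(\ell + k_1^{(n_k)} - \ell_0)}$ once $\ell + k_1^{(n_k)} \ge \ell_0$ (which holds for all sufficiently large $k$ since $\ell$ is fixed) and tends to $0$. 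Since the remaining factors lie in $(0, 1]$, the product vanishes, forcing $\widehat{\mu}(A_\ell) = 0$ for every $\ell \in \Z$. However, as $\ell \to -\infty$ the function $S \mapsto \chi(-2\varpi^{-\ell} S_{12})$ converges pointwise to $1$, and the dominated convergence theorem gives $\widehat{\mu}(A_\ell) \to \widehat{\mu}(0) = 1$, which is the desired contradiction.

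The main obstacle is choosing the right test matrix $A$: it must have small rank parameter $r$ so that the error bound in Theorem \ref{thm-asy-mul} is controllable as $n_k \to \infty$, yet must interact non-trivially with the top eigencomponent $\varpi^{-k_1^{(n)}} J$ so that the multiplicative product captures the divergence of $k_1^{(n_k)}$. The choice $A_\ell = \varpi^{-\ell}(e_{12} - e_{21})$ with $r = 1$ achieves both; once it is in hand, the rest is a routine combination of Lemma \ref{lem-theta}, weak convergence, and continuity of $\widehat{\mu}$ at $0$.
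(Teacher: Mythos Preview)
Your argument is correct and is precisely the approach the paper has in mind: the paper omits the proof, pointing to \cite[Lemma 8.6]{BQ-sym}, and your contradiction via the single-block test matrix $A_\ell$, the asymptotic formula of Theorem \ref{thm-asy-mul} with $r=1$, and the explicit form of $\Theta$ from Lemma \ref{lem-theta} is exactly how that argument transfers to the skew-symmetric setting.
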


\begin{proof}
The proof of Lemma \ref{lem-compact} is similar to that of \cite[Lemma 8.6]{BQ-sym}.
\end{proof}

\begin{thm}\label{ct-ns}
The  map $\mathbbm{k} \mapsto \mu_{\mathbbm{k}}$ induces a bijection between  $\Delta$ and $\mathcal{P}_{\mathrm{erg}}(\Skew(\N, F))$. 
\end{thm}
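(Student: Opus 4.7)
Theorem~\ref{thm-erg-1} already supplies both the ergodicity of each $\mu_{\mathbbm{k}}$ and the injectivity of $\mathbbm{k}\mapsto\mu_{\mathbbm{k}}$, so the entire remaining content is surjectivity. The plan is to approximate an arbitrary ergodic measure $\mu$ by finite-dimensional orbital measures via Vershik's theorem, read off a candidate parameter $\mathbbm{k}\in\Delta$ from their standard forms by a compactness argument, and identify $\mu$ with $\mu_{\mathbbm{k}}$ by matching characteristic functions through the formulas in Theorem~\ref{thm-asy-mul}, Lemma~\ref{lem-theta}, and Proposition~\ref{prop-explicit}.

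Concretely: given $\mu\in\mathcal{P}_{\mathrm{erg}}(\Skew(\N,F))$, Theorem~\ref{Vershik-thm} yields orbital measures $\mu_{n_\ell}\in\ORB_{n_\ell}(\Skew(2n_\ell,F))$ with $\mu_{n_\ell}\Longrightarrow\mu$. Lemma~\ref{lem-stan-form} then places each orbital measure in the standard form $\mu_{n_\ell}=m_{n_\ell}\bigl(\diag(\varpi^{-k_1^{(\ell)}}J,\ldots,\varpi^{-k_{n_\ell}^{(\ell)}}J)\bigr)$ with $k_1^{(\ell)}\ge\cdots\ge k_{n_\ell}^{(\ell)}$ in $\Z\cup\{-\infty\}$. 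Tightness of the convergent sequence, applied through the necessary direction of Lemma~\ref{lem-compact}, forces a uniform bound $k_j^{(\ell)}\le\gamma$. A diagonal argument on the compact set $\{-\infty\}\cup\{k\in\Z:k\le\gamma\}$ then extracts a subsequence along which $k_j^{(\ell)}\to k_j$ for every fixed $j$; by monotonicity the limit $\mathbbm{k}:=(k_j)_{j\ge 1}$ belongs to $\Delta$.

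The identification $\mu=\mu_{\mathbbm{k}}$ is then carried out by comparing characteristic functions on test matrices $A=\diag(\varpi^{-\ell_1}J,\ldots,\varpi^{-\ell_r}J,0,\ldots)$. Weak convergence gives $\widehat{\mu_{n_\ell}}(A)\to\widehat{\mu}(A)$, Theorem~\ref{thm-asy-mul} replaces the orbital integral by $\prod_{i=1}^{r}\prod_{j=1}^{n_\ell}\Theta(\varpi^{-\ell_i-k_j^{(\ell)}})$ up to vanishing error, and Lemma~\ref{lem-theta} together with Proposition~\ref{prop-explicit} reduce the task, for each fixed~$i$, to the scalar limit
\begin{equation*}
\sum_{j=1}^{n_\ell}(k_j^{(\ell)}+\ell_i-\ell_0)\,\1_{k_j^{(\ell)}\ge\ell_0-\ell_i}\;\longrightarrow\;\sum_{j=1}^{\infty}(k_j+\ell_i-\ell_0)\,\1_{k_j\ge\ell_0-\ell_i}
\end{equation*}
in $[0,+\infty]$.

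The main obstacle will be precisely this last limit, because the left-hand sum runs over $n_\ell\to\infty$ indices whereas the diagonal extraction only controls $k_j^{(\ell)}$ for each fixed $j$. The resolution is a dichotomy driven by the threshold $M_i:=\ell_0-\ell_i$. If $\lim_j k_j\le M_i$, integrality of the $k_j$'s forces $k_j\le M_i$ for all $j$ beyond some $J_0$, so convergence of $k_{J_0}^{(\ell)}$ together with monotonicity $k_j^{(\ell)}\le k_{J_0}^{(\ell)}$ kills every index $j\ge J_0$ once $\ell$ is large, and the limit reduces to a finite sum handled by pointwise convergence. If $\lim_j k_j>M_i$, then every $k_j>M_i$, so the right-hand side is $+\infty$ and $\widehat{\mu_{\mathbbm{k}}}(A)=0$; on the left, truncating $\prod_{j=1}^{n_\ell}\Theta\le\prod_{j=1}^{J}\Theta$ for fixed $J$, passing $\ell\to\infty$ by pointwise convergence, and then letting $J\to\infty$ forces the left-hand product to $0$ as well. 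The two cases together give $\widehat{\mu}=\widehat{\mu_{\mathbbm{k}}}$ on $\Skew(\infty,F)$, whence $\mu=\mu_{\mathbbm{k}}$ by uniqueness of characteristic functions.
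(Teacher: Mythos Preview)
Your proposal is correct and follows essentially the same approach as the paper, which simply refers to the analogous argument in \cite[Theorem~8.8]{BQ-sym}: Vershik's theorem supplies approximating orbital measures, Lemma~\ref{lem-compact} gives a uniform upper bound on the exponents, a diagonal extraction produces the candidate $\mathbbm{k}\in\Delta$, and the asymptotic multiplicativity (Theorem~\ref{thm-asy-mul}) together with Proposition~\ref{prop-explicit} matches characteristic functions. One small wording issue: in your ``$\lim_j k_j\le M_i$'' case, when the limit actually equals $M_i$ you do not literally ``kill'' the indicator $\1_{k_j^{(\ell)}\ge M_i}$ for $j\ge J_0$---it may remain equal to $1$---but the corresponding summand $(k_j^{(\ell)}+\ell_i-\ell_0)$ vanishes, which is what you need.
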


\begin{proof}
The proof is similar to that of \cite[Theorem 8.8]{BQ-sym}.
\end{proof}

\begin{proof}[Proof of Theorem \ref{CT-skew}]
By Theorem \ref{ct-ns}, we only need to prove that the map $\mathbbm{k} \mapsto \mu_{\mathbbm{k}}$ from $\Delta$ to $\mathcal{P}_{\mathrm{erg}} (\Skew(\N, F))$ and its inverse are both continuous.  This part of proof is similar to that of \cite[Theorem 1.2]{BQ-sym}.
\end{proof}

\section{Correspondence  between $\mathcal{P}_{\mathrm{inv}}(\Mat(\N, F))$ and $\mathcal{P}_{\mathrm{inv}}(\Skew(\N, F))$ }

In this section,  we give the proof of Theorem \ref{thm-cor}. First, we derive a Krein-Milman type result from the ergodic decomposition formula due to Alexander Bufetov for invariant probability measures with respect to a fixed  action of inductively compact group.  Let 
\begin{align*}
K(1) \subset K(2) \subset \cdots \subset K(n) \subset \cdots
\end{align*}
be an increasing chain of compact metrizable groups and set
\begin{align*}
K(\infty) =\bigcup_{n\in\N} K(n). 
\end{align*}
Fix a group action of $K(\infty)$  on a Polish space $\X$. 

\begin{thm}[{Bufetov \cite[Theorem 1]{Bufetov-erg-dec}}]\label{Buf-thm}
For any $\nu\in\mathcal{P}_{\mathrm{inv}}^{K(\infty)}(\X)$, there exists a  Borel probability $\overline{\nu}$ on $\mathcal{P}_{\mathrm{erg}}^{K(\infty)}(\X)$ such that 
\begin{align}\label{erg-dec}
\nu  = \int\limits_{\mathcal{P}_{\mathrm{erg}}^{K(\infty)}(\X)} \eta \,  d\overline{\nu} (\eta).  
\end{align}
Clearly, any Borel probability measure $\nu$ represented by the formula \eqref{erg-dec} is $K(\infty)$-invariant. 
\end{thm}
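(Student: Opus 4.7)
The plan is to adapt the classical martingale-based proof of ergodic decomposition (known for actions of locally compact second countable groups) to the inductively compact setting. The crucial observation is that a Borel probability measure on $\X$ is $K(\infty)$-invariant if and only if it is $K(n)$-invariant for every $n\in\N$, and each $K(n)$ is a compact metrizable group carrying a normalized Haar measure $dk_n$. Although $K(\infty)$ itself admits no Haar measure, the averaging operators $P_n f(x):=\int_{K(n)} f(k\cdot x)\,dk_n(k)$ are well defined on bounded Borel functions $f:\X\to\C$ and define $L^1(\nu)$-contractions.

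First I would introduce the $\sigma$-algebras $\mathcal{I}_n$ of $K(n)$-invariant Borel subsets of $\X$. Since $K(n)\subset K(n+1)$, the sequence $(\mathcal{I}_n)$ is decreasing, with intersection $\mathcal{I}_\infty:=\bigcap_n \mathcal{I}_n$ equal to the $\sigma$-algebra of $K(\infty)$-invariant Borel sets. The sequence $(P_n f)$ is a reverse martingale with respect to $(\mathcal{I}_n)$, so by the reverse martingale convergence theorem $P_n f$ converges $\nu$-a.s.\ and in $L^1(\nu)$ to the conditional expectation $\mathbb{E}_\nu[f\,|\,\mathcal{I}_\infty]$.

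Next, since $\X$ is Polish, the regular conditional probability theorem yields a disintegration $\nu=\int_\X \nu_x\, d\nu(x)$ of $\nu$ along $\mathcal{I}_\infty$, with $x\mapsto\nu_x$ being $\mathcal{I}_\infty$-measurable and each $\nu_x$ concentrated on the atom of $\mathcal{I}_\infty$ through $x$. Two properties are then verified for $\nu$-a.e.\ $x$: (a) the measure $\nu_x$ is $K(\infty)$-invariant, which follows from the identity $\int P_n f\, d\nu_x=\int f\, d\nu_x$ obtained by combining the defining relation $\int f\, d\nu_x=\mathbb{E}_\nu[f\,|\,\mathcal{I}_\infty](x)$ with Fubini and the $K(n)$-invariance of $\nu$; and (b) the measure $\nu_x$ is ergodic, since its $K(\infty)$-invariant $\sigma$-algebra modulo $\nu_x$ coincides with the trace of $\mathcal{I}_\infty$ on the atom through $x$, which is $\nu_x$-trivial by construction. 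Setting $\overline{\nu}$ to be the pushforward of $\nu$ under $x\mapsto\nu_x$ then produces the desired Borel probability on $\mathcal{P}_{\mathrm{erg}}^{K(\infty)}(\X)$, and the representation formula \eqref{erg-dec} follows from the disintegration applied to test functions. The converse statement is immediate because $K(\infty)$-invariance is preserved under convex integrals.

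The principal technical obstacle will be the measurability issues: one must verify that $\mathcal{P}_{\mathrm{erg}}^{K(\infty)}(\X)$ is a Borel subset of $\mathcal{P}(\X)$ (equipped with the weak topology) and that the map $x\mapsto\nu_x$ is Borel into this subset, so that the pushforward is well defined on the target space. This is handled by choosing a countable dense subgroup inside each compact metrizable $K(n)$, and a countable determining family of bounded continuous functions on the Polish space $\X$; both $K(\infty)$-invariance and ergodicity of a measure then reduce to countable Borel conditions (the latter via the equality of a test function with its $\mathcal{I}_\infty$-conditional expectation), which makes $\mathcal{P}_{\mathrm{erg}}^{K(\infty)}(\X)$ a Borel subset of $\mathcal{P}(\X)$ and the above selection $x\mapsto\nu_x$ Borel into it.
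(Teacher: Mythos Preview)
The paper does not prove this theorem at all: it is stated as a citation of Bufetov's result \cite[Theorem~1]{Bufetov-erg-dec} and used as a black box in the proof of Theorem~\ref{thm-cor}. There is therefore no ``paper's own proof'' to compare your attempt against.

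That said, your sketch is essentially the correct strategy and is close in spirit to Bufetov's original argument: one exploits that $K(\infty)$ is an increasing union of compact groups, uses the Haar averaging operators $P_n$ and the reverse martingale convergence theorem to produce $\mathbb{E}_\nu[\,\cdot\,|\,\mathcal{I}_\infty]$, disintegrates $\nu$ along the tail $\sigma$-algebra $\mathcal{I}_\infty$ via regular conditional probabilities on the Polish space $\X$, and then checks invariance and ergodicity of the fibre measures. Your identification of the measurability of $\mathcal{P}_{\mathrm{erg}}^{K(\infty)}(\X)$ and of the selection $x\mapsto\nu_x$ as the main technical points is accurate. One step that deserves a bit more care than you give it is the ergodicity of $\nu_x$: the assertion that the $K(\infty)$-invariant $\sigma$-algebra is $\nu_x$-trivial does not follow merely from ``concentration on the atom through $x$'' (the $\sigma$-algebra $\mathcal{I}_\infty$ need not be atomic), but rather from the fact that for $A\in\mathcal{I}_\infty$ one has $\nu_x(A)=\mathbb{E}_\nu[\mathbbm{1}_A\,|\,\mathcal{I}_\infty](x)=\mathbbm{1}_A(x)\in\{0,1\}$ for $\nu$-a.e.\ $x$; this is the standard argument and your outline is consistent with it, but the phrasing should be tightened.
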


Given a subset $E\subset V$ of a locally convex topological vector space $V$, we denote by $\overline{\co}[E]$ the closed convex hull of $E$.  If moreover,  $K(\infty)$ acts on $\X$ by homeomorphisms, then it is easy to see that  $\mathcal{P}_{\mathrm{inv}}^{K(\infty)}(\X)$  is weakly closed. As an immediate corollary of Bufetov's Theorem \ref{Buf-thm}, we have

\begin{cor} 
Assume that  $K(\infty)$  acts on a Polish space $\X$ by homeomorphisms. Then 
$\mathcal{P}_{\mathrm{inv}}^{K(\infty)}(\X)  = \overline{\co}[\mathcal{P}_{\mathrm{erg}}^{K(\infty)}(\X)]$.
\end{cor}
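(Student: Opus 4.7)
The plan is to establish the two inclusions of the asserted equality separately.

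For the easy direction $\overline{\co}[\mathcal{P}_{\mathrm{erg}}^{K(\infty)}(\X)]\subset \mathcal{P}_{\mathrm{inv}}^{K(\infty)}(\X)$, I would just combine two facts already recorded in the text: every ergodic $K(\infty)$-invariant measure is in particular $K(\infty)$-invariant, so $\mathcal{P}_{\mathrm{erg}}^{K(\infty)}(\X)\subset \mathcal{P}_{\mathrm{inv}}^{K(\infty)}(\X)$; and $\mathcal{P}_{\mathrm{inv}}^{K(\infty)}(\X)$ is a convex, weakly closed subset of $\mathcal{P}(\X)$ (convexity being obvious and weak closedness being noted just before the statement, using that each $g\in K(\infty)$ acts by a homeomorphism). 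Hence this closed convex set contains the closed convex hull of $\mathcal{P}_{\mathrm{erg}}^{K(\infty)}(\X)$.

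The substantive direction is the reverse inclusion. Fix $\nu\in \mathcal{P}_{\mathrm{inv}}^{K(\infty)}(\X)$. By Bufetov's Theorem \ref{Buf-thm}, there exists a Borel probability $\overline{\nu}$ on $\mathcal{P}_{\mathrm{erg}}^{K(\infty)}(\X)$ with $\nu=\int \eta\,d\overline{\nu}(\eta)$. Since the weak topology turns $\mathcal{P}(\X)$ into a Polish space, its subset $\mathcal{P}_{\mathrm{erg}}^{K(\infty)}(\X)$ is separable and metrizable, so a standard approximation of Borel probabilities on a separable metric space produces finitely supported measures
\[
\overline{\nu}_k=\sum_{i=1}^{N_k} p_{k,i}\,\delta_{\eta_{k,i}},\qquad \eta_{k,i}\in\mathcal{P}_{\mathrm{erg}}^{K(\infty)}(\X),
\]
such that $\overline{\nu}_k\Longrightarrow \overline{\nu}$. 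I then set $\nu_k:=\sum_{i=1}^{N_k} p_{k,i}\,\eta_{k,i}\in \co[\mathcal{P}_{\mathrm{erg}}^{K(\infty)}(\X)]$ and aim to show $\nu_k\Longrightarrow \nu$.

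For this final convergence, pick any bounded continuous $f:\X\to\C$. Fubini yields
\[
\int_{\X} f\,d\nu_k=\int_{\mathcal{P}_{\mathrm{erg}}^{K(\infty)}(\X)} \Big(\int_{\X} f\,d\eta\Big)\,d\overline{\nu}_k(\eta),
\]
and by the very definition of the weak topology the functional $\eta\mapsto \int_{\X} f\,d\eta$ is bounded and continuous on $\mathcal{P}(\X)$. Combined with $\overline{\nu}_k\Longrightarrow \overline{\nu}$ and the analogous formula for $\nu$ in place of $\nu_k$, this gives $\int_{\X} f\,d\nu_k\to \int_{\X} f\,d\nu$, so $\nu\in \overline{\co}[\mathcal{P}_{\mathrm{erg}}^{K(\infty)}(\X)]$, completing the proof. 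The one step deserving any care is the weak density of finitely supported probabilities on the separable metric space $\mathcal{P}_{\mathrm{erg}}^{K(\infty)}(\X)$, but this is classical (derivable from separability and inner regularity) and I would simply quote it rather than reprove it.
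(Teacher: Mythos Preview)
Your proof is correct and follows exactly the route the paper intends: the paper states this as an ``immediate corollary'' of Bufetov's Theorem~\ref{Buf-thm} and gives no further argument, and you have simply spelled out the standard barycenter-approximation details that make this immediate. There is nothing to add.
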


Let us define $\mathcal{M}_{\mathrm{inv}}(\Skew(\N, F))$ to be the Banach space over $\R$ consisting  of $\GL(\infty, \O_F)$-invariant signed Borel measures {\it of finite total variation} on $\Skew(\N, F)$ and equipped with the norm: 
$$
\|\mu\| = | \mu| = \text{total variation of $\mu$}.
$$
 Define  the Banach space $\mathcal{M}_{\mathrm{inv}}(\Mat(\N, F))$ in a similar way. Note that by Jordan decomposition theorem for signed measures, any measure $\sigma\in \mathcal{M}_{\mathrm{inv}}(\Mat(\N, F))$ is represented by 
$$
\sigma = \lambda_1 \mu_1 - \lambda_2 \mu_2, \text{ with } \lambda_1, \lambda_2 \ge 0, \mu_1, \mu_2 \in \mathcal{P}_{\mathrm{inv}}(\Mat(\N, F)). 
$$

\begin{proof}[Proof of Theorem \ref{thm-cor}]
By the classification of $\mathcal{P}_{\mathrm{erg}}(\Skew(\N, F))$  in Theorem \ref{CT-skew}  and the classification $\mathcal{P}_{\mathrm{erg}}(\Mat(\N, F))$ in \cite{BQ-sym}, we see immediately that the following  affine map defined in \S \ref{sec-main}: 

\begin{align}\label{tau-pf}
\tau_{*}:  \mathcal{P}_{\mathrm{inv}}(\Mat(\N, F)) \to \mathcal{P}_{\mathrm{inv}}(\Skew(\N, F))
\end{align}
induces a homeomorphism
$$
\tau_{*}:  \mathcal{P}_{\mathrm{erg}}(\Mat(\N, F)) \to \mathcal{P}_{\mathrm{erg}}(\Skew(\N, F)). 
$$
Consequently, we have the following bijection
\begin{align*}
(\tau_{*})_{*}:  \mathcal{P} (\mathcal{P}_{\mathrm{erg}}(\Mat(\N, F)))  \to \mathcal{P}( \mathcal{P}_{\mathrm{erg}}(\Skew(\N, F))). 
\end{align*}
Applying Theorem \ref{Buf-thm}, we obtain that the map  \eqref{tau-pf} is a bijection. 

Now we prove that the map \eqref{tau-pf} is homeomorphic. The bijective map \eqref{tau-pf} extends to a bijective linear map (denoted again by $\tau_{*}$): 
\begin{align}\label{tau-extend}
\tau_{*}:  \mathcal{M}_{\mathrm{inv}}(\Mat(\N, F)) \to \mathcal{M}_{\mathrm{inv}}(\Skew(\N, F))
\end{align}
Moreover, since the map $X\stackrel{\tau}{\mapsto}X-X^t$ is continuous from  $\Mat(\N, F)$ to $\Skew(\N, F)$, so is the induced map  \eqref{tau-extend}. Applying the Open Mapping Theorem, we obtain that the map  \eqref{tau-extend} is a homeomorphism. It follows that the restriction \eqref{tau-pf} is also homeomorphic. The proof of Theorem \ref{thm-cor} is completed. 
\end{proof}

{\flushleft \bf Comments on Theorem \ref{thm-cor}.}
(i) The injectivity of the map \eqref{tau-pf} can be proved directly in an alternative way as follows. Assume that $\sigma \in \mathcal{P}_{\mathrm{inv}}(\Mat(\N, F))$. Let us show  that  $\widehat{\sigma}$ is uniquely determined by $\tau_{*}(\sigma)$. 

First let us compute 
$$
\widehat{\sigma}(\diag(x, y, 0, \cdots)), \quad x, y \in F^{\times}.
$$  
By the invariance of $\sigma$, the  function $\widehat{\sigma}(\diag(x, y, 0, \cdots))$ is symmetric on $x, y$. Without loss of generality, we may assume that $0 < |y| \le |x|$. Then $x^{-1} y\in \O_F\setminus \{0\}$. We claim that 
\begin{align}\label{sigma-tau-sigma}
\widehat{\sigma}(\diag(x, y, 0, \cdots))=\widehat{\tau_{*}(\sigma)} (2^{-1} x J).
\end{align}
Indeed, the following equality 
\begin{align*}
\left[\begin{array}{cc} x & 0\\ 0& y\end{array}\right]  = \left[\begin{array}{cc}0 & x\\ -x& 0\end{array}\right] \left[\begin{array}{cc}0 & -x^{-1}y\\1 &0 \end{array}\right]
\end{align*}
combined with the $\GL(\infty, \O_F)\times \GL(\infty, \O_F)$-invariance of $\sigma$ implies that  
\begin{align}\label{diag-yJ}
\widehat{\sigma}(\diag(x, y, 0, \cdots)) = \widehat{\sigma} \left( \left[\begin{array}{cc}0 & x\\ -x& 0\end{array}\right] \left[\begin{array}{cc}0 & -x^{-1}y\\1 &0 \end{array}\right]\right)  = \widehat{\sigma} (xJ ). 
\end{align}
Take  an infinite random matrix $M$ in $\Mat(\N, F)$ sampled with respect to $\sigma$, then the distribution of the random matrix $M - M^t$  is $\tau_{*}(\sigma)$.  We can write \eqref{diag-yJ} in the form 
\begin{align}\label{exp-form}
\begin{split}
&\E[\chi(M(1,1) x + M(2, 2) y)] = \E[\chi((M(1,2) - M(2,1)) \cdot x)]
\\
& = \E [\chi(  2^{-1} x \cdot \tr ((M - M^t) J)) ]. 
\end{split}
\end{align}
This is exactly the desired equality \eqref{sigma-tau-sigma}. 

Taking $y\to 0$ in \eqref{exp-form},  by bounded convergence theorem, we get 
\begin{align*}
\widehat{\sigma}(x e_{11})=\widehat{\tau_{*}(\sigma)} (2^{-1} x J).
\end{align*}
Similarly, for  general $r\in\N$, we can write the function 
\begin{align*}
\widehat{\sigma}(\diag(x_1, \cdots, x_r, 0, \cdots)), \quad x_1, \cdots, x_r \in F
\end{align*}
in terms of  the characteristic function of $\tau_{*}(\sigma)$.  

We thus prove the injectivity of the map \eqref{tau-pf}. 

(ii) For the natural group action $\GL(\infty, \O_F)$ on the space $\Sym(\N, F)$ of infinite symmetric matrices over $F$, there is not an analogue result as Theorem \ref{thm-cor} for $\mathcal{P}_{\mathrm{inv}}(\Mat(\N, F))$ and  $\mathcal{P}_{\mathrm{inv}}(\Sym(\N, F))$. 

 However, similar argument as in above  shows that if $\ch(F)\ne 2$, then the map $X\mapsto X + X^t$ from $\Mat(\N, F)$ to $\Sym(\N, F)$ induces an affine embedding 
\begin{align*}
 \mathcal{P}_{\mathrm{inv}}(\Mat(\N, F)) \hookrightarrow \mathcal{P}_{\mathrm{inv}}(\Sym(\N, F)). 
\end{align*} 

(iii) Note that in Euclidean case, for any size, there exists a trivial correspondence between the space of Hermitian matrices  and the space of anti-Hermitian matrices. Hence we have a trivial correspondence of the set of unitarily invariant Borel probability measures on the space of infinite Hermitian matrices and the set of Borel probability measures on the space of infinite anti-Hermitian matrices. 

\begin{rem}
It is not clear whether one can prove, without using the classifications of ergodic measures, that the map \eqref{tau-pf} is surjective. 
\end{rem}

\section{Appendix}

\begin{proof}[Proof of Lemma \ref{lem-stan-form}]
First note that for any $x  = u \varpi^{-k}\in F$ with $u\in \O_F^{\times}$ and $k \in \Z\cup\{-\infty\}$, we have
\begin{align*}
xJ =  \left[ \begin{array}{cc}0 & x \\ -x & 0 \end{array}\right] =  \left[ \begin{array}{cc}u & 0 \\  0& 1 \end{array}\right] \left[ \begin{array}{cc}0 & \varpi^{-k} \\ -\varpi^{-k} & 0 \end{array}\right] \left[ \begin{array}{cc}u & 0 \\ 0 & 1 \end{array}\right]. 
\end{align*}
It follows that we only need to prove that each skew-symmetric matrix $A\in\Skew(2n, F)$ can be written in the form 
\begin{align}\label{A-dec-bis}
A = g \cdot \diag (x_1J , x_2 J , \cdots, x_n J ) \cdot g^t, \quad (g \in \GL(2n, \O_F)).
\end{align}
If $n=1$, then there is nothing to prove. Assume that the decomposition \eqref{A-dec-bis} holds for $n-1$. Now take any non-zero skew-symmetric matrix  $A  = [x_{ij}]_{1\le i, j \le 2n}\in \Skew(2n, F)$. We claim that, upper to passing to $g \cdot A \cdot g^t$ for  a certain $g \in \GL(2n, \O_F)$ if necessary, we may assume that 
\begin{align*}
|x_{12}| = \max_{1\le i, j \le 2n}|x_{ij}|.
\end{align*}
Indeed, assume that $(i_0, j_0)$  is a pair of indices such that $i_0 < j_0$ and $|x_{i_0j_0}  | = \max_{1\le i, j \le 2n} | x_{ij}|.$ If $(i_0, j_0) = (1, 2)$, then there is nothing to do. 
Now assume that  $(i_0, j_0)\ne (1, 2)$. For a given permutation $\sigma \in S(2n)$, we denote by $M_{\sigma}$ the permutation matrix defined by $M_{\sigma} (i, j)= \1_{\sigma(i) =j}$.  If $\{i_0, j_0\} \cap\{1, 2\} = \emptyset$, then let $g =  M_{(1 i_0)} M_{(2j_0)}$, where $(1 i_0)$ and $(2j_0)$ are the transpositions exchanging $1, i_0$ and $2, j_0$ respectively. Then
\begin{align*}
g A g^t = M_{(1 i_0)} M_{(2j_0)} A M_{(2 j_0)}^t M_{(1 i_0)}^t = M_{(2j_0)} M_{(1 i_0)}   A M_{(2 j_0)} M_{(1 i_0)}
\end{align*}
and the $(1,2)$-coefficient of $g A g^t$ is $x_{i_0j_0}$.   If $\{i_0, j_0\} \cap\{1, 2\} \ne \emptyset$. Either $\{i_0, j_0\} \cap\{1, 2\} = \{1\}$,  since $j_0 > i_0$, we must have $i_0 = 1$ or $\{i_0, j_0\} \cap\{1, 2\} = \{2\}$, which implies that  $i_0 = 2$, indeed, otherwise, we would have $j_0=2$, then since $i_0 < j_0$, we would have $(i_0, j_0) = (1, 2)$ contradicts to the assumption that $(i_0, j_0)\ne (1,2)$.  In the first case $i_0=1$, take $g  = M_{(1j_0)}$, then the $(1, 2)$-coefficient of $gAg^t = M_{(2j_0)} A M_{(2j_0)}$ is $x_{i_0j_0}$.  In the second case $i_0=2$, take $g = M_{(2j_0)}M_{(12)}$, then 
\begin{align*}
g Ag^t  = M_{(2j_0)}M_{(12)} AM_{(12)}   M_{(2j_0)} 
\end{align*}
and the $(1,2)$-coefficient of $g A g^t$ is $x_{i_0j_0}$.  

Now we may write $A$ in the block form $A = [A_{uv}]_{1\le u, v\le n}$ with $A_{uv}$ the $2\times 2$ blocks. By the above assumption, we have $A_{11} = xJ$ and $x$ has maximal absolute value.  By taking 
\begin{align*}
g^t = \left[\begin{array}{ccccc} 1 & x^{-1} J A_{12} & x^{-1} J A_{13} & \cdots & x^{-1} JA_{1n}  \\ 0 & 1 & 0 & \cdots & 0 \\ 0& 0 & 1 & \cdots &0 \\ \vdots & \vdots & \vdots & \ddots& \vdots \\ 0 & 0 & 0 & \cdots& 1\end{array}\right] \in \GL(2n, \O_F), 
\end{align*}
we arrive at 
\begin{align*}
gA g^t =\left[\begin{array}{cc} xJ & 0 \\ 0 & A' \end{array}\right]. 
\end{align*}
By induction assumption, the $(2n-2)\times (2n-2)$ skew-symmetric matrix  $A'$ is diagonalizable. Hence so does $A$. By induction, Lemma \ref{lem-stan-form} is proved completely. 
\end{proof}


\section{Acknowledgements}
The author is deeply grateful to  Grigori Olshanski and Alexander Bufetov for helpful discussions.
This research is supported by the grant IDEX UNITI-ANR-11-IDEX-0002-02, financed by Programme ``Investissements d'Avenir'' of the Government of the French Republic managed by the French National Research Agency.


\def\cprime{$'$} \def\cydot{\leavevmode\raise.4ex\hbox{.}} \def\cprime{$'$}

\end{document}